\documentclass[12pt]{amsart}
\usepackage{amssymb, amsfonts, amsthm} 
\usepackage{graphicx}
\usepackage{color}


\addtolength{\textwidth}{4cm}
\addtolength{\hoffset}{-2cm}
\addtolength{\textheight}{2cm}
\addtolength{\voffset}{-1cm}

\newtheorem{theorem}[equation]{Theorem}
\newtheorem{lemma}[equation]{Lemma}
\newtheorem{corollary}[equation]{Corollary}
\newtheorem{proposition}[equation]{Proposition}
\newtheorem*{theorem*}{Theorem}

\numberwithin{equation}{section}

\theoremstyle{definition}
\newtheorem{definition}[equation]{Definition}

\newtheorem{example}[equation]{Example}

\newtheorem{problem}[equation]{Problem}

\newtheorem{openproblem}[equation]{Open problem}

\newcommand{\be}{\begin{equation}}
\newcommand{\ee}{\end{equation}}

\newcommand{\Rn}{{\mathbb R}^n}
\newcommand{\Rnbar}{\overline{{\mathbb R}}^n}
\newcommand{\R}{{\mathbb R}}
\newcommand{\Rt}{{\mathbb R}^2}

\newcommand{\psubset}{\varsubsetneq}

\newcommand{\ds}{\displaystyle}

\newcommand{\comment}[1]{}

\pagestyle{myheadings}


\newcounter{minutes}\setcounter{minutes}{\time}
\divide\time by 60
\newcounter{hours}\setcounter{hours}{\time}
\multiply\time by 60 \addtocounter{minutes}{-\time}

\begin{document}

\title[On the stability of $\varphi$-uniform domains]
{On the stability of $\varphi$-uniform domains}
\author[R. Kl\'en]{R. Kl\'en}
\address{Department of Mathematics and Statistics, University of Turku,
FIN-20014 Turku, Finland}
\email{riku.klen@utu.fi}
\author[Y. Li]{Y. Li}
\address{Department of Mathematics,
Hunan Normal University, Changsha,  Hunan 410081, People's Republic
of China} \email{yaxiangli@163.com}
\author[S.K. Sahoo]{S.K. Sahoo}
\address{Department of Mathematics, Indian Institute of Technology Indore, Indore 452 017, India}
\email{swadesh@iitm.ac.in}
\author[M. Vuorinen]{M. Vuorinen}
\address{Department of Mathematics and Statistics, University of Turku,
FIN-20014 Turku, Finland}
\email{vuorinen@utu.fi}



\keywords{Uniform continuity, modulus of continuity, the quasihyperbolic metric,
the distance ratio metric $j$, uniform domains, $\varphi$-uniform domains,
quasiconvex domains, removability}
\subjclass[2010]{Primary 30F45; Secondary 30C65}

\begin{abstract}
We study two metrics, the quasihyperbolic metric and the distance ratio metric of
a subdomain $G \subset {\mathbb R}^n$. 
In the sequel, we investigate a class of domains, so called $\varphi$-uniform domains, defined by the property
that these two metrics are comparable with respect to a homeomorphism $\varphi$
from $[0,\infty)$ to itself. Finally, we discuss a number of stability
properties of $\varphi$-uniform domains. In particular, we show that the class of
$\varphi$-uniform domains is stable in the sense that removal of a geometric
sequence of points from a $\varphi$-uniform domain yields a  $\varphi_1$-uniform domain.
\end{abstract}

\maketitle

\def\thefootnote{}
\footnotetext{ \texttt{\tiny File:~\jobname .tex,
          printed: \number\year-\number\month-\number\day,
          \thehours.\ifnum\theminutes<10{0}\fi\theminutes }
} \makeatletter\def\thefootnote{\@arabic\c@footnote}\makeatother


\section{Introduction}
\label{intro}

For a subdomain $G \psubset {\mathbb R}^n$ and
$x,y\in G$ the {\em distance ratio
metric $j_G$} is defined by
$$ j_G(x,y)=\log\left(1+\frac{|x-y|}{\min\{\delta_G(x),\delta_G(y)\}}\right)\,,
$$ where $\delta_G(x)$ denotes the Euclidean distance from $x$ to $\partial G$. Sometimes
we abbreviate $\delta_G$ by writing just $\delta\, .$
The above form of the $j_G$ metric, introduced in \cite{Vu2}, is obtained by a slight modification
of a metric that was studied in \cite{GO,GP}.
The {\em quasihyperbolic metric} of $G$ is defined by
the quasihyperbolic length minimizing property
$$k_G(x,y)=\inf_{\gamma\in \Gamma(x,y)} \ell_k(\gamma),
\quad \ell_k(\gamma) =\int_\gamma \frac{|dz|}{\delta_G(z)}\,,
$$
where $\Gamma(x,y)$ represents the family of all rectifiable paths
joining $x$ and $y$ in $G$, and $\ell_k(\gamma)$ is the quasihyperbolic length of
$\gamma$ (cf. \cite{GP}).
For a given pair of points $x,y\in G,$ the infimum is always
attained \cite{GO}, i.e., there always exists a quasihyperbolic
geodesic $J_G[x,y]$ which minimizes the above integral,
$k_G(x,y)=\ell_k(J_G[x,y])\,$ and furthermore with the property that
the distance is additive on the geodesic: $k_G(x,y)=$
$k_G(x,z)+k_G(z,y) $ for all $z\in J_G[x,y]\,.$ If the domain $G$ is
emphasized we call $J_G[x,y]$ a $k_G$-geodesic. In this paper, sometimes we also use
the terminology {\em distance} for the term {\em metric}.

The following well-known properties of the above two metrics 
are useful in this paper.

\begin{enumerate}
\item[(i)] For $x,y\in G\psubset\Rn$, we have $k_G(x,y)\ge j_G(x,y)$ \cite{GP};
\item[(ii)] Monotonicity property: if $G_1$ and $G_2$ are domains, 
with $ G_2\subset G_1\psubset\Rn\,$, then for all
$x,y\in G_2$ we have $k_{G_1}(x,y)\le k_{G_2}(x,y).$ It is obvious
that this property holds for the distance ratio metric $j_G$ as well.
\end{enumerate}

In 1979, Martio and Sarvas introduced the class of uniform domains \cite{MS79}.


\begin{definition}\label{unif} A domain $D$ in $\Rn$  is said to be  {\it $c$-uniform}  if there
exists a constant $c$ with the property that each pair of points
$z_{1},z_{2}$ in $D$ can be joined by a rectifiable arc $\gamma$ in
$ D$ satisfying (cf. \cite{MS79, Va91})
\begin{enumerate}
\item\label{eq-1}
$\ds\min_{j=1,2}\ell (\gamma [z_j, z])\leq c\, \delta_D(z)$ for all $z\in \gamma$, and

\item\label{eq-2}
$\ell(\gamma)\leq c\,|z_{1}-z_{2}|$,
\end{enumerate}
where $\ell(\gamma)$ denotes the arclength of $\gamma$,
$\gamma[z_{j},z]$ the part of $\gamma$ between $z_{j}$ and $z$.
Also, we say that $\gamma$ is a {\it uniform arc}.
A domain is said to be {\em uniform} if it is $c$-uniform for some constant $c>0$.
\end{definition}

In the same year,
Gehring and Osgood \cite{GO} characterized uniform domains in terms
of an upper bound for the quasihyperbolic metric as follows: a
domain $G$ is {\em uniform} if and only if there exists a constant $C\ge
1$ such that
\begin{equation}\label{C-uniform}
k_G(x,y)\le C j_G(x,y)
\end{equation}
for all $x,y\in G$. As a matter of fact, the above inequality
appeared in \cite{GO} in a form with an additive constant on the
right hand side: it was shown by Vuorinen \cite[2.50]{Vu2} that the
additive constant can be chosen to be $0$.  This observation leads to
the definition of $\varphi$-uniform domains  introduced in \cite{Vu2}.

\begin{definition}\label{psi-unif}Let $\varphi:\, [0,\infty)\to [0,\infty)$ be a homeomorphism.
A domain $G\psubset\Rn$ is said to be {\em $\varphi$-uniform} if
$$k_G(x,y)\le \varphi(|x-y|/\min\{\delta(x),\delta(y)\})
$$
for all $x,y\in G$.\end{definition}

In the sequel, V\"ais\"al\"a has also investigated this
class of domains \cite{Va91} (see also \cite{Va98} and references therein).
He also pointed out that these two classes of domains are same provided
$\varphi$ is a slow function. We make sure that, in this paper, we use the terminology
{\em $c$-uniform} for constants $c$, and frequently use {\em $\psi$-uniform, $\vartheta$-uniform}
and {\em $\varphi$-uniform} for functions $\psi$, $\vartheta$, $\varphi$.

In Section~2, we introduce notation and preliminary results that we need in the latter sections. 
The structure of the rest of the sections covers mainly on $\varphi$-uniform domains.

In Section~3, we construct several examples of $\varphi$-uniform domains and
compare with their complementary domains and with quasiconvex domains.
We also prove that the image domain of a $\varphi$-uniform domain under
bilipschitz mappings of $\Rn$ is $\psi$-uniform, where $\psi$ is
depending on $\varphi$ and the bilipschitz constant.

In Section~4, we present our main results (e.g. see Theorems \ref{phiunif-E} and \ref{psiunif4})
on $\varphi$-uniform domains in the following form:

\begin{theorem*}
Let $G$ be a $\varphi$-uniform domain in $\Rn$. Let $B$ be a ball with $2B\subset G$. Suppose
that $E$ is a compact subset of $B$ such that $\Rn\setminus E$ is $\psi$-uniform. Then
$G\setminus E$ is $\vartheta$-uniform, where $\vartheta$ depends only on $\varphi$ and $\psi$.
\end{theorem*}

Note that one of our proofs involves a control function of a fixed parameter on which $\vartheta$ also depends.
Idea behind this is to obtain various other stability properties of $\varphi$-uniform domains
to use as preparatory results to prove the main theorems in the above type.
In particular, it is shown that
the class of $\varphi$-uniform domains is stable in the sense that removal of
a geometric sequence of points from a $\varphi$-uniform domain leads to
a $\varphi_1$-uniform domain.

\section{Notation and Preliminary results}
\label{Prelim}

 We shall now specify some necessary notation,
definitions and facts that we frequently use in this paper. The
standard unit vectors in the Euclidean $n$-space $\Rn$ ($n\ge 2$)
are represented by $e_1,e_2,\ldots,e_n$. We write $x\in\Rn$ as a
vector $(x_1,x_2,\ldots,x_n)$.
The Euclidean line segment joining points $x$ and $y$ is denoted by $[x,y]$.
For $x,y,z\in \Rn$, the smallest angle at $y$ between the vectors $x-y$ and $z-y$
is denoted by $\measuredangle (x,y,z)$.
The one point
compactification of $\Rn$ (so-called the M\"obius $n$-space) is
defined by $\Rnbar=\Rn\cup \{\infty\}$. We denote by $B^n(x,r)$ and
$S^{n-1}(x,r)$, the Euclidean ball and sphere with radius $r$
centered at $x$ respectively. We set $B^n(r):=B^n(0,r)$ and
$S^{n-1}(r):=S^{n-1}(0,r)$. Let $G$ be a domain (open connected
non-empty set) in $\Rn$. The boundary, closure and diameter of $G$
are denoted by $\partial G$, $\overline{G}$
and ${\rm diam}\,G$
respectively.  In what follows, all
paths $\gamma\subset G$ are required to be rectifiable, i.e.
$\ell(\gamma)<\infty$  where $\ell(\gamma)$
stands for the Euclidean length of $\gamma$. Given $x,y\in G$,
$\Gamma(x,y)$ stands for the collection of all rectifiable paths
$\gamma\subset G$ joining $x$ and $y\,.$

%
%

We now formulate some basic results on quasihyperbolic distances
which are indeed used latter in Section \ref{Sec3}.
The following lemma is established in \cite{Vu2}.
\begin{lemma} \label{Vu2-lem} 
Define
$$a(\theta)=1+(2/\theta)+\pi/\Big(2\log\frac{2+2\theta}{2+\theta}\Big),\quad \mbox{ for $\theta\in(0,1)$.}
$$
Let $G\psubset\Rn$ be a domain.
If $x,y,z\in G$ with $x,y\in G\setminus B^n(z,\theta \delta_G(z))$, then
$$k_{G\setminus\{z\}}(x,y)\le a(\theta)\,k_G(x,y)\,.
$$
\end{lemma}
It is seen from Lemma~\ref{Vu2-lem} that $a(\theta)$ is well-defined for $\theta=1$.
However, the method of the proof does not give any guarantee to obtain the same value of
$a(\theta)$ when $\theta=1$.

\begin{lemma}\label{complementofB}
If $r > 0$ and $x,y \in G = \Rn \setminus \overline{B}^n(r)$ with $|x|=|y|$, then
$$ k_G(x,y) \le \frac{|x|}{|x|-r}k_{\Rn \setminus \{ 0 \}}(x,y) \le \frac{|x-y|\,\pi}{2(|x|-r)}\,.
$$
\end{lemma}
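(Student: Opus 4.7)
The plan is to use a single test curve, the great-circle arc $\gamma^{\ast}\subset S^{n-1}(|x|)$ from $x$ to $y$, to capture both halves of the inequality at once. Write $G_0=\Rn\setminus\{0\}$ and set $\alpha=\angle(x,0,y)\in[0,\pi]$, so that $\gamma^\ast$ has Euclidean length $|x|\alpha$ and $|x-y|=2|x|\sin(\alpha/2)$.

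First I would identify $\gamma^\ast$ as the $k_{G_0}$-geodesic by showing $k_{G_0}(x,y)=\alpha$. For any rectifiable $\gamma:[0,1]\to G_0$ the polar decomposition $\gamma(t)=\rho(t)\omega(t)$ with $\omega(t)\in S^{n-1}$ and $\rho(t)=|\gamma(t)|$ gives $|\gamma'|^{2}=(\rho')^{2}+\rho^{2}|\omega'|^{2}$, so using $\delta_{G_0}(\gamma)=\rho$ one computes
\[
\ell_{k}^{G_0}(\gamma)=\int_{0}^{1}\frac{|\gamma'|}{\rho}\,dt=\int_{0}^{1}\sqrt{(\rho'/\rho)^{2}+|\omega'|^{2}}\,dt\ge\int_{0}^{1}|\omega'|\,dt\ge\alpha,
\]
the final step holding because $\omega$ is a curve on $S^{n-1}$ from $x/|x|$ to $y/|y|$. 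Taking $\rho\equiv|x|$ and $\omega$ equal to the shorter great-circle arc attains $\alpha$, giving $k_{G_0}(x,y)=\alpha=\ell_{k}^{G_0}(\gamma^{\ast})$.

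For the first claimed inequality, since $|z|=|x|>r$ along $\gamma^{\ast}$, the arc lies in $G$ with $\delta_{G}\equiv|x|-r$ there, so
\[
k_{G}(x,y)\le\ell_{k}^{G}(\gamma^{\ast})=\frac{\ell(\gamma^{\ast})}{|x|-r}=\frac{|x|\,\alpha}{|x|-r}=\frac{|x|}{|x|-r}\,k_{G_0}(x,y);
\]
the factor $|x|/(|x|-r)$ is just the constant ratio $\delta_{G_0}/\delta_{G}$ along the sphere $S^{n-1}(|x|)$. For the second inequality I would invoke Jordan's inequality $\sin t\ge (2/\pi)t$ on $[0,\pi/2]$: applied with $t=\alpha/2$, together with $|x-y|=2|x|\sin(\alpha/2)$, it yields $\alpha\le\pi\sin(\alpha/2)=\pi|x-y|/(2|x|)$, and multiplying through by $|x|/(|x|-r)$ gives the stated bound. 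The only real obstacle is identifying the $k_{G_0}$-geodesic; once the polar decomposition is in place, the remaining inequalities are direct substitutions and a single appeal to Jordan.
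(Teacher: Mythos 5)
Your proof is correct, and for the first inequality it takes a genuinely more self-contained route than the paper. The paper simply cites \cite[Theorem~5.20]{Klen} for the bound $k_G(x,y)\le \frac{|x|}{|x|-r}k_{\Rn\setminus\{0\}}(x,y)$, whereas you derive it directly: you show via the polar decomposition $|\gamma'|^2=(\rho')^2+\rho^2|\omega'|^2$ that $k_{\Rn\setminus\{0\}}(x,y)=\alpha$ exactly for $|x|=|y|$ (the paper only ever uses the one-sided estimate $k_{\Rn\setminus\{0\}}(x,y)\le\theta$, which is the easy half), and then read off both quasihyperbolic lengths of the same circular test arc, the factor $|x|/(|x|-r)$ being the ratio of the two densities on $S^{n-1}(|x|)$. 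This buys a proof with no external dependency and a sharper intermediate fact, at the cost of the short measurability/differentiability bookkeeping needed to justify the polar-coordinate length formula for a general rectifiable curve (parametrize by arc length so $\gamma$ is Lipschitz and the identity holds a.e.). For the second inequality your argument coincides with the paper's: the chord identity $|x-y|=2|x|\sin(\alpha/2)$ plus Jordan's inequality $\alpha\le\pi\sin(\alpha/2)$ on $[0,\pi]$.
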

\begin{proof}
The first inequality follows from \cite[Theorem~5.20]{Klen}. For the second inequality
we see that if $\theta=\measuredangle(x,0,y)$, then we have the identity
$$|x-y|^2=|x|^2+|y|^2-2|x|\,|y|\cos \theta\,.
$$
Since $|x|=|y|$ it follows that $\sin(\theta/2)=|x-y|/(2|x|)$. For $0\le \theta\le \pi$,
the well-known inequality $\theta\le \pi\sin(\theta/2)$ gives that $\theta\le \pi|x-y|/(2|x|)$.
Since $k_{\Rn \setminus \{ 0 \}}(x,y)\le \theta$, when $|x|=|y|$, we conclude the second inequality.
\end{proof}


\begin{lemma}\label{Lem1} For $r\in[\frac{1}{4},1)$, we define
$a(r)=\Big(4\Big(\frac{r+2}{4r-1}\Big)+\frac{2}{\log\frac{2+2r}{2+r}}\Big).$
Let $D$ be a proper subdomain of $\Rn$. If $x,y,z\in D$ with $y,z\in D\setminus B^n(x,r \delta_D(x)),$
$E=\{x\}\cup\{x_k\}_{k=1}^{\infty}$
where  $\{x_k\}_{k=1}^{\infty}\in B^n(x, \delta_D(x))$ is a sequence of points satisfying
 $x_k\in[x,x_{k-1})$ and $|x-x_k|=\frac{1}{2^{k+2}}\delta_D(x),$
then $$k_{D\setminus E}(y,z)\leq a(r)k_D(y,z).$$
\end{lemma}
\begin{proof}  Let $D_1=D\setminus  E ,$ and $\delta_D$ denote the Euclidean distance to the boundary of $D$.
Observe first that if $r\in(\frac{1}{4},1)$ and $w\in D \setminus B^n(x,\frac{r}{2}\delta_D(x))$,
then
 \begin{equation}\label{eq1'}\delta_D(w)\leq 4\Big(\frac{r+2}{4r-1}\Big)\delta_{D_1}(w),
\end{equation}
where the inequality holds because of the following.
If $\delta_D(w)=\delta_{D_1}(w),$ then \eqref{eq1'} holds trivially.
If $\delta_D(w)>\delta_{D_1}(w),$ then there exists some point $p\in E$ such that
$$\delta_{D_1}(w)=|w-p|\geq |w-x|-|x-p|\geq\Big(\frac{r}{2}-\frac{1}{8}\Big)\delta_D(x)\,.
$$
Hence
$$\delta_D(w)\leq \delta_D(x)+|x-p|+|p-w|\leq4\Big(\frac{r+2}{4r-1}\Big)\delta_{D_1}(w)\, .
$$

Let $\gamma$ be a quasihyperbolic geodesic joining $z$ and $y$ in $D$ and let
$U=\gamma\cap\overline{B}^n(x,\frac{r}{2}\delta_D(x)).$ We consider
two cases.

{\em Case I:}\;\;  $U\neq\emptyset$.

Let $z'$ be the first point in $U$ when we traverse along $\gamma$ from $z$ to $y$. The point $y'$ in $U$ is the first point
when we traverse from $y$ to $z$. Let $T$ be a $2$-dimensional linear subspace of $E$ containing $x$,$y'$ and $z'$. Then $y'$
and $z'$ divide the circle $T\cap S^{n-1}(x,\frac{r}{2}\delta_D(x))$ into two arcs, denote the shorter arc
(which may be a semicircle) by  $\alpha$.
Then  \eqref{eq1'} yields
$$k_{D_1}(y,y')\leq \int_{\gamma[y,y']}\frac{|dw|}{\delta_{D_1}(w)}
\leq4\Big(\frac{r+2}{4r-1}\Big)\int_{\gamma[y,y']}\frac{|dw|}{\delta_D(w)}=4\Big(\frac{r+2}{4r-1}\Big)k_{D}(y,y'),$$

\noindent and
$$k_{D_1}(y',z')\leq \pi.$$

Hence, the inequalities
\begin{eqnarray*}k_{D_1}(y,z)&\!\! \le \!\!& k_{D_1}(y,y') + k_{D_1}(y',z') + k_{D_1}(z',z)\\
    &\!\! \le \!\!& 4\Big(\frac{r+2}{4r-1}\Big)k_{D}(y,y')+\pi+4\Big(\frac{r+2}{4r-1}\Big)k_{D}(z,z')\\
&\!\! \le \!\!&4\Big(\frac{r+2}{4r-1}\Big)k_{D}(z,y)+4
\end{eqnarray*}
together with
\begin{eqnarray*} k_D(y,z)&\!\! = \!\!&\int_{\gamma[y,z]}\frac{|dw|}{\delta_D(w)} \leq k_D(y,y')+k_D(z',z)\\
&\!\! \le \!\!& \log\Big(1+\frac{|y-y'|}{\delta_D(y')}\Big)+\log\Big(1+\frac{|z-z'|}{\delta_D(z')}\Big)\\&\!\! \le \!\!&
2\log\Big(1+\frac{\frac{r}{2}\delta_D(x)}{\delta_D(x)+\frac{r}{2}\delta_D(x)}\Big)
\\&\!\! \le \!\!&2\log\Big(\frac{1+r}{1+\frac{r}{2}}\Big)
\end{eqnarray*}
give
\begin{eqnarray*}k_{D_1}(y,z)&\!\! \le \!\!&4\Big(\frac{r+2}{4r-1}\Big)k_{D}(z,y)+4\\&\!\! \le \!\!&
\Big(4\Big(\frac{r+2}{4r-1}\Big)+\frac{2}{\log(\frac{1+r}{1+\frac{r}{2}})}\Big)k_{D}(y,z).
\end{eqnarray*}

{\em Case II:} \;\;  $U=\emptyset$.

By \eqref{eq1'} we have
$$k_{D_1}(y,z)\leq 4\Big(\frac{r+2}{4r-1}\Big)k_{D}(y,z).$$

We finished the proof with $a(r)=\Big(4\Big(\frac{r+2}{4r-1}\Big)+\frac{2}{\log\frac{2+2r}{2+r}}\Big).$
\end{proof}

\begin{lemma}\label{genVu2-lem}
  For $\alpha,\theta\in(0,1)$, we define
  \[
    a(\alpha,\theta) = \frac{2+\theta+\alpha\theta}{\theta(1-\alpha^2)}
+\frac{(1+\alpha)\pi}{2(1-\alpha)\log ((2+2\theta)/(2+\theta+\alpha \theta))}\,.
  \]
If $x,y,z\in G$ with $x,y\in G\setminus B^n(z,\theta \delta_G(z))$, then
  \[
    k_{G'}(x,y)\le a(\alpha,\theta)\,k_G(x,y)\,,
  \]
  where $G' = G\setminus \overline{B}^n(z,\alpha \theta d(z,\partial G))$.
\end{lemma}
\begin{proof}
 Denote by $\delta(z)=d(z,\partial G)$.
 Fix $\beta \in (0,1)$ and $w \in G \setminus \overline{B}^n(z,\beta \delta(z))$. Choose
  $q\in S^{n-1}(z,\alpha \beta \delta(z))$ and $p\in\partial G$ such that
  $|w-q|=d(w,S^{n-1}(z,\alpha \beta \delta(z)))$ and $|p-z|=\delta(z)$.
Then we have
  $|w-q|\ge \beta(1-\alpha)\delta(z)$ and hence
  $$|p-q|\le (1+\alpha\beta)\delta(z)\le \frac{1+\alpha\beta}{\beta(1-\alpha)}|w-q|\,.
  $$
  It follows by the triangle inequality $|w-p|\le |p-q|+|w-q|$ that
  \begin{equation}\label{estimateofd}
    d(w,\partial G)\le |w-p| \le \frac{1+1/\beta}{1-\alpha}d(w,\partial G \cup S^{n-1}(z,\alpha \beta \delta(z)))\,.
  \end{equation}
  Let $J$ be a geodesic joining $x$ and $y$ in $G$ (i.e. $J=J_G[x,y]$)
  and $U = J \cap \overline{B}^n(z,(1+\alpha)\theta\delta(z)/2)$.

  If $U \neq \emptyset$ then we denote by $x'$ the first point in $U$, when we traverse along $J$ from $x$ to $y$.
  We similarly define $y'$ in $U\,,$ but traversing from $y$ to $x$ along $J$. By (\ref{estimateofd}) and Lemma
\ref{complementofB}
  \begin{eqnarray*}
    k_{G'}(x,y) &\!\! \le \!\!& k_{G'}(x,x')+k_{G'}(x',y')+k_{G'}(y',y) \\
    &\!\! \le \!\!& \frac{2+\theta+\alpha\theta}{\theta(1-\alpha^2)}k_G(x,x') + \frac{1+\alpha}{1-\alpha}\pi
+ \frac{2+\theta+\alpha\theta}{\theta(1-\alpha^2)}k_G(y',y) \\
    &\!\! \le \!\!& \frac{2+\theta+\alpha\theta}{\theta(1-\alpha^2)}k_G(x,y) + \frac{1+\alpha}{1-\alpha}\pi\,.
  \end{eqnarray*}
  Since $k_G \ge j_G$ we have
  \[
    k_{G}(x,y) \ge k_G(x,x')+k_G(y',y) \ge 2 \log \left( 1+\frac{\theta-(1+\alpha)\theta/2}{1+(1+\alpha)\theta/2} \right)
= 2 \log \frac{2+2\theta}{2+\theta+\alpha\theta}
  \]
  and therefore
  \[
    k_{G'}(x,y) \le a(\alpha,\theta) k_G(x,y)
  \]
holds for
  \[
    a(\alpha,\theta) = \frac{2+\theta+\alpha\theta}{\theta(1-\alpha^2)}+\frac{(1+\alpha)\pi}{2(1-\alpha)
\log ((2+2\theta)/(2+\theta+\alpha \theta))}\,.
  \]

  If $U = \emptyset$, then by (\ref{estimateofd})
  \[
    k_{G'}(x,y) \le \frac{2+\theta+\alpha\theta}{\theta(1-\alpha^2)}k_G(x,y) \le a(\alpha,\theta)k_G(x,y)\,.
  \]
  The assertion follows.
\end{proof}

Clearly Lemma \ref{genVu2-lem} implies Lemma \ref{Vu2-lem} as $\alpha \to 0$.

\section{Examples of $\varphi$-uniform domains}\label{Sec3}

In order to give examples of $\varphi$-uniform domains,
consider domains $G$ satisfying the following geometric property \cite[Examples~2.50~(1)]{Vu2}:
there exists a constant $C\ge 1$ such that each pair of points
$x,y\in G$ can be joined by a rectifiable path $\gamma\in G$ with
$\ell(\gamma)\le C\,|x-y|$ and $\min\{\delta(x),\delta(y)\}\le
C\,d(\gamma,\partial G)$. Then $G$ is $\varphi$-uniform with
$\varphi(t)=C^2t$. In particular, every convex domain is
$\varphi$-uniform with $\varphi(t)=t$. However, in general, convex
domains need not be uniform.
More complicated nontrivial examples of $\varphi$-uniform domains can be
seen by considering that of uniform domains which are extensively studied by
several researchers. For instance, it is noted in \cite{Mac96} 
that complementary components of quasim\"obius (and hence
bi-Lipschitz) spheres are uniform.

\subsection*{Complementary domains}
In this subsection we understand $\Rn\setminus \overline{D}$, for the terminology {\em complementary domain}
of a domain $D\subset\Rn$.
When we talk about complement of a domain is another domain, we mean in the sense of its
complementary domain.
Because simply connected uniform domains in plane are quasidisks \cite{MS79} (see also \cite{Geh99}),
it follows that the complement of such a uniform domain also is uniform.
A motivation to this observation of uniform domains leads to investigate
the complementary domains in the case of $\varphi$-uniform domains.
In fact we see from the following examples that complementary domains
of $\varphi_1$-uniform domains are not always $\varphi$-uniform for any $\varphi$.
The first example investigates the matter in the case of half-strips.
\begin{example}\label{half-strip}
Since the half-strip defined by
$S=\{(x,y)\in \Rt:\, x>0,~ -1<y<1\}$
is convex, by the above discussion we observe that it is $\varphi$-uniform
with $\varphi(t)=t$. On the other hand, by considering the points
$z_n=(n,-2)$ and $w_n=(n,2)$ we see that $G:=\Rt\setminus \overline{S}$
is not a $\varphi$-uniform domain. Indeed, we have
$j_G(z_n,w_n)=\log 5$ and for some $m\in \R\cap J_G[z_n,w_n]$
$$k_G(z_n,w_n)\ge k_G(m,w_n)\ge \log\left(1+\frac{|m-w_n|}{\delta(w_n)}\right)
\ge \log(1+n)\to \infty \quad\mbox{as $n\to\infty$}.
$$
This shows that $G$ is not $\varphi$-uniform for any $\varphi$.
\hfill{$\triangle$}
\end{example}

The above example gives a convex $\varphi$-uniform domain whose
complement is not $\psi$-uniform for any $\psi$.
We can construct a number of examples of $\varphi_1$-uniform domains,
whose complement is not a $\varphi$-uniform for any $\varphi$, by suitable
changes in the shape of the boundaries of the convex domains of above type.
For instance, we have the following example which provides
a $\varphi$-uniform domain (not convex) whose complement is not $\psi$-uniform for
any $\psi$.
\begin{example}\label{phiunif-exp2}
Define
$$D_m=\left\{(x,y)\in \Rt:\, |x|<\frac{1}{1+\log m},~0<y<\frac{me}{10}\right\}.
$$
It is clear by \cite[2.50]{Vu2} that the domain $D=\bigcup_{m=1}^\infty D_m$ is
$\varphi$-uniform with $\varphi(t)=2t$.
On the other hand, a similar reasoning explained in Example~\ref{half-strip} gives that
its complement $G'=\Rt\setminus \overline{D}$, is not $\psi$-uniform for
any $\psi$ (see Figure~\ref{ksv-fig1}).
\begin{figure}
\centering
\includegraphics{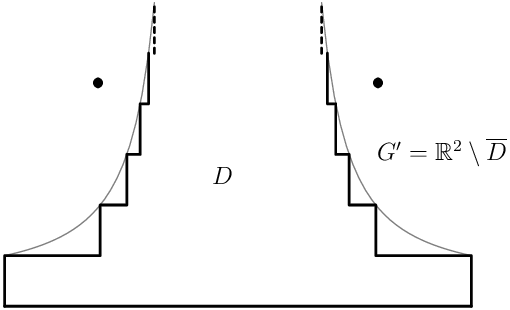}
\caption{An unbounded $\varphi$-uniform domain (not convex) $D\subset\Rt$ whose complement
$G'=\Rt\setminus\overline{D}$
is not $\psi$-uniform for any $\psi$. \label{ksv-fig1}}
\end{figure}
\hfill{$\triangle$}
\end{example}

We see that the $\varphi$-uniform domains considered in the above two examples are unbounded,
which generally asks the following problem:
\begin{problem}\label{phiunif-prob1}
Are there any bounded $\varphi_1$-uniform domains whose complementary domains are not
$\varphi$-uniform for any $\varphi$?
\end{problem}

The speciality in dimension 2, for Problem~\ref{phiunif-prob1}, is much more delicate.
Indeed, one can handle this matter by considering the domain by pulling thinner and
thinner rectangles from one edge of a rectangle (e.g. see Figure~\ref{newfig}).
\begin{figure}
\centering
\includegraphics[width=5cm]{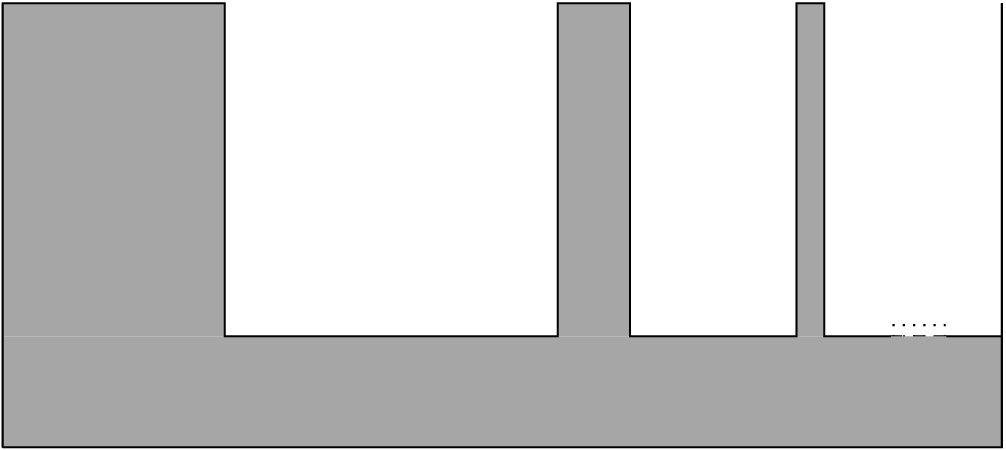}
\caption{A bounded $\varphi$-uniform domain whose complementary domain is not $\psi$-uniform for any $\psi$.}\label{newfig}
({\em The authors thank P. H\"ast\"o and one of the referees who have given the idea of this domain.})
\end{figure}
In this setting, one can
even similarly find Jordan domains which are $\varphi$-uniform but their complements
are not $\psi$-uniform for any $\psi$.
But we are not studying detail on it in this paper. However, it is sometimes
interesting to see examples in higher dimensional setting.

In three dimensional setting, we now provide solutions to Problem~\ref{phiunif-prob1} as follows:
\begin{example}\label{revo-ex1}
Let $T$ be the triangle with vertices $(1,-1)$, $(0,0)$ and $(1,1)$. Consider the
domain $D$ bounded by the surface of revolution generated by revolving $T$ about the
vertical axis (see Figure~\ref{kvs-fig3}).


\begin{figure}
\begin{minipage}{0.45\linewidth}
\centering
\includegraphics[width=6cm]{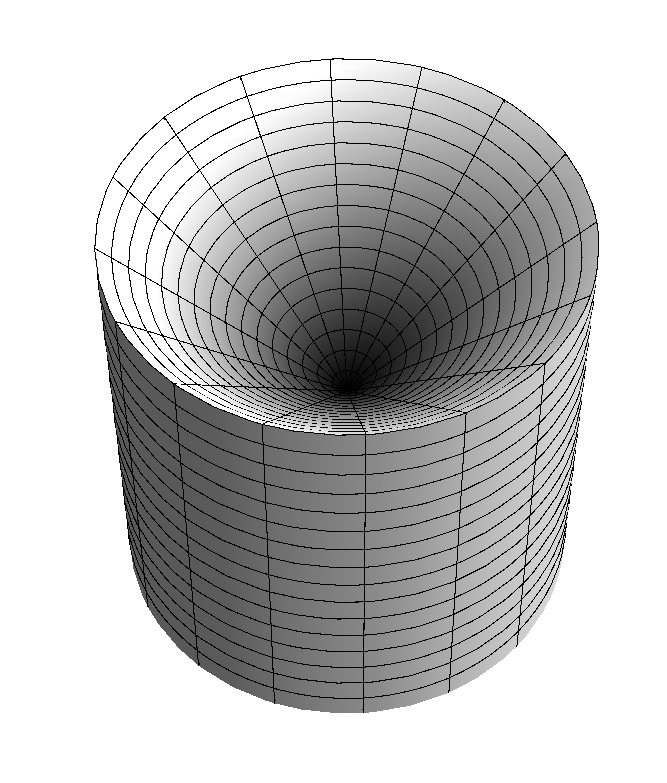}
\end{minipage}
\begin{minipage}{0.45\linewidth}
\centering
\includegraphics[width=6cm]{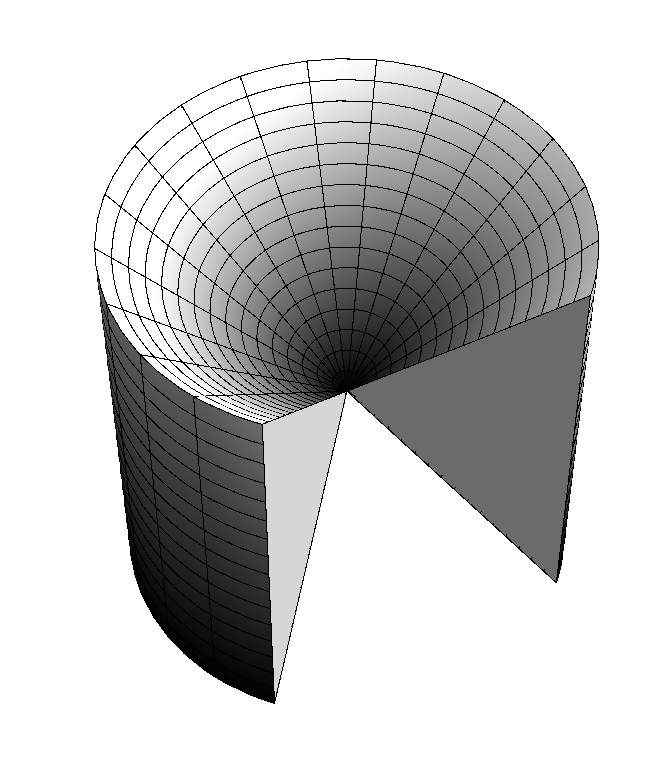}
\end{minipage}
\caption{A bounded $\varphi_1$-uniform domain in $\R^3$
whose complementary domain is not $\varphi$-uniform
(the right hand side figure shows the revolution).
\label{kvs-fig3}}
\end{figure}
Then we see that $D$ is $\varphi_1$-uniform for some $\varphi_1$ (in fact, $D$ is uniform\,!).
Indeed, let $x,y\in D$ be arbitrary. Without loss
of generality we assume that $|x|\ge |y|$. Consider the path $\gamma=[x,x']\cup C$ joining
$x$ and $y$, where $x'\in S^1(|y|)$ is chosen so that $|x'-x|=d(x,S^1(|y|))$; and $C$ is the smaller
circular arc of $S^1(|y|)$ joining $x'$ to $y$. For $z\in D$, we write $\delta(z):=d(z,\partial D)$.
Then for all $x,y\in D$ we have
\begin{eqnarray*}
k_D(x,y)\le \int_\gamma \frac{|dz|}{\delta(z)}
&\!\! = \!\!& \int_{[x,x']} \frac{|dz|}{\delta(z)}+ \int_C \frac{|dz|}{\delta(z)}\\
&\!\! \le \!\!& \frac{|x-y|}{\min\{\delta(x),\delta(y)\}}+ \int_C \frac{|dz|}{\delta(z)}\\
&\!\! \le \!\!& \left(1+\frac{\pi}{2}\right)\frac{|x-y|}{\min\{\delta(x),\delta(y)\}},
\end{eqnarray*}
where the last inequality follows by the fact that $\ell(C)\le \pi |x-y|/2$
(see the proof of Lemma~\ref{complementofB}).

On the other hand,
its complementary domain $G={\mathbb R}^3\setminus \overline{D}$ is not $\varphi$-uniform for any $\varphi$.
This can be easily seen by the choice $z_t=te_2\in G$, $0<t<1$. Indeed, we have $j_G(-z_t,z_t)=\log (1+2\sqrt{2})$;
and a similar argument as in Example~\ref{half-strip} leads
$$ k_G(-z_t,z_t)\ge \log\Big(1+\frac{\sqrt{2}}{t}\Big)\to \infty \quad\mbox{as $t\to 0$.}
$$
The assertion follows.
\hfill{$\triangle$}
\end{example}

Example~\ref{revo-ex1} provides a bounded $\varphi$-uniform domain in $\R^3$ which is
not simply connected. In the following, we construct a bounded simply
connected domain in $\R^3$ which is $\varphi$-uniform but its complement is not.

\begin{example}\label{doublecone}
Fix $h=1/3$. For the sake of convenience, we denote the coordinate axes in $\R^3$
by $x$-, $y$- and $z$-axes. Let $D$ be a domain obtained by rotating the triangle with vertices
$(0,0,h)$, $(1,0,0)$  and $(0,0,-h)$ around the $z$-axis. For each $k\ge 1$, we let
$$x_k=1-4^{-k} ~~\mbox{ and }~~ h_k=(1-x_k)/(10h)\,.
$$

We now modify the boundary of $D$ as follows: let us drill the cavity of
$D$ from two opposite directions of $z$-axis such that the drilling axis,
parallel to $z$-axis, goes through the point $(x_k,0,0)$. From the positive
direction we drill until the tip of the drill is at the height $h_k$ and from
the opposite direction we drill up to the height $-h_k$\,. The cross section
(see the left hand side of Figure~\ref{kvs-fig3double}) of
the upper conical surface will have its tip at $(x_k,y_k)$ described by
$$y-h_k=A(x-x_k),\quad A=\pm 1\,.
$$
This intersects the boundary of the cavity represented by $z=h(1-x)$ at
$$x=\frac{h-h_k+Ax_k}{A+h}\,.
$$
This gives
$$u_k=x|_{A=-1}=\frac{h-h_k-x_k}{-1+h}
~~\mbox{ and }~~
v_k=x|_{A=1}=\frac{h-h_k+x_k}{1+h}\,.
$$
Obviously, $u_k\le x_k \le v_k$. Since $v_k\le u_{k+1}$, we see that
two successive drilling do not interfere.

\begin{figure}
\begin{minipage}{0.45\linewidth}
\centering
\includegraphics[height=5cm]{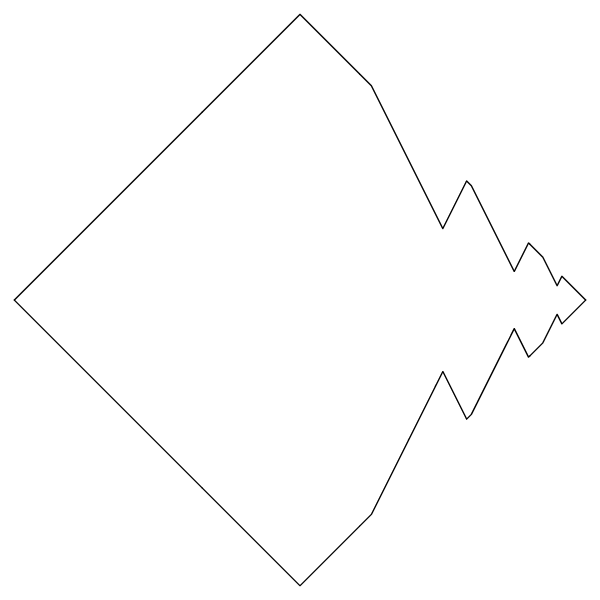}
\end{minipage}
\begin{minipage}{0.45\linewidth}
\centering
\includegraphics[height=5cm]{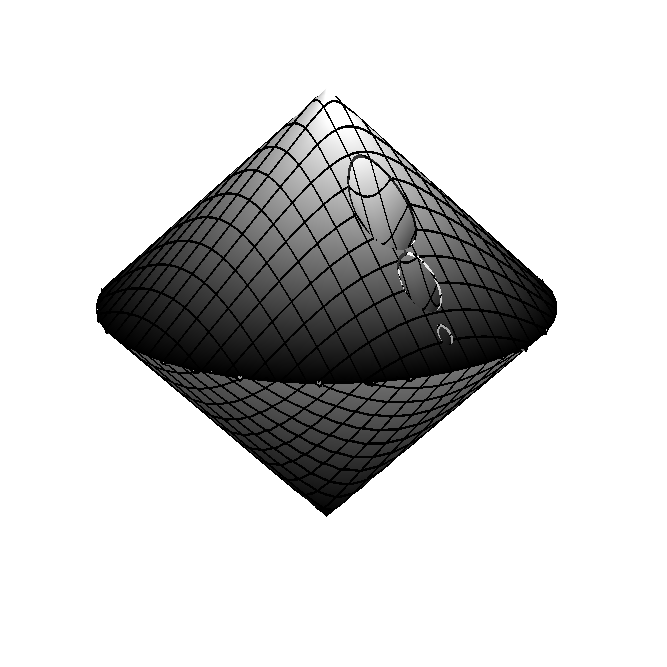}
\end{minipage}
\caption{ A double cone domain with two-sided drillings. The right hand side picture
provides a schematic view of the simply connected domain $G\subset
\mathbb {R}^3$ constructed in Example \ref{doublecone}. The left hand side picture is
a cross section of $G$. The domain $G$ is uniform but its complement is not $\varphi$-uniform
for any $\varphi$. \label{kvs-fig3double}}
\end{figure}

Induction on $k$ gives us a new domain $G\subset \R^3$ which is simply connected
and uniform, but its complement is not $\varphi$-uniform for any $\varphi$\,.
Indeed, the choice of points $z_k=(x_k,0,2h_k)$ and $w_k=(x_k,0,-2h_k)$ in
$U=\R^3\setminus\overline{G}$ gives that
$$d(z_k,\partial U)=\min\{x_k-u_k, v_k-x_k\}=h_kf(h)\,.
$$
It follows that
$$j_U(z_k,w_k)=\log\left(1+\frac{4h_k}{h_kf(h)}\right)< \infty\,.
$$
On the other hand,
$$k_U(z_k,w_k)\ge \log\left(1+\frac{\sqrt{1+h^2}}{d(z_k,\partial U)}\right)
=\log\left(1+\frac{\sqrt{1+h^2}}{h_kf(h)}\right)\to \infty \quad\mbox{ as $k\to \infty$}\,.
$$
This shows that $U$ is not $\varphi$-uniform for any $\varphi$.
\hfill{$\triangle$}
\end{example}

\subsection*{Quasiconvex domains}
A domain $G\subset \Rn$ is said to be {\em quasiconvex} if there exists a constant $c>0$ such that
every pair of points $x,y\in G$ can be joined by a rectifiable path $\gamma\subset G$ satisfying
$\ell(\gamma)\le c\,|x-y|$. We observe that the domains $G$ and $G'$ respectively in Examples
\ref{half-strip} and \ref{phiunif-exp2}
are not quasiconvex and are not bounded too. This naturally leads to the
following problems.

\begin{problem}\label{phiunif-prob2}
Is it true that quasiconvex domains are $\varphi$-uniform and vice versa?
\end{problem}

We have a partial solution to Problem \ref{phiunif-prob2}, which is described in the following
example. This shows that there exist quasiconvex domains which are not $\varphi$-uniform for any $\varphi$.

\begin{example}
Start out with a construction of a finite point set on the boundary of the
unit square   $Q= [-1,1]\times [-1,1]$. For a fixed integer  $m >2$, we put on
$\partial Q$ so many evenly spaced
points with distance $2d$, their union is $E$, such that for every point in
$w \in \Rt \setminus Q$
we have
$$k_{\Rt \setminus E}(0,w) > m\,.
$$
Then obviously it is enough to choose $d$ such that
\begin{equation}\label{qc-examp}
k_{\Rt \setminus E}(0,w) \ge  k_{\Rt \setminus E}(0,p)\ge \log\Big(\frac{|p|}{{\rm dist}\,(p,E)}\Big)\ge  \log(1/d) > m\,,
\end{equation}
where $p$ is the point of intersection of $\partial Q$ and the geodesic segment from $0$ to $w$ in $\Rt \setminus E$.
We say that such a set $E$ is of type $m$.

Now we choose sequence of sets  $E_m$,
$m=1,2,3,\ldots$
such that each set is via a similarity transformation (i.e. a function $f$ of the form $|f(x)-f(y)|=c\,|x-y|$) equivalent to a
set of type $m$, and
that the sets behave as in Figure~\ref{ksv-fig5}
\begin{figure}
\centering
\includegraphics[width=4cm]{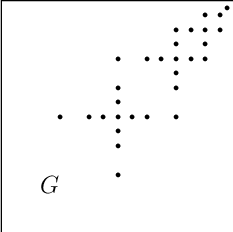} 
\caption{A quasiconvex planar domain $G$ which is not $\varphi$ uniform for any $\varphi$\,. \label{ksv-fig5}}
\end{figure}
(i.e. converge to a corner of the square and are linked with each other at
the corner points and that diameter of $E_m$ is  $c \sqrt{2}  2^{-m}$, where $c$ is the constant of similarity transformation).
We denote $G:=Q \setminus   \cup_{m=1}^{\infty} \{ E_m   \}$.

Let $w_m$ be the center of the square, on whose boundary, the points of $E_m$ are located.
Then
$$\frac{|w_m - w_{m+1}|}{\min\{\delta_G(w_m),\delta_G(w_{m+1})\}}
=\frac{c\sqrt{2}(2^{-m}+2^{-(m+1)})}{(c\,2^{-m-1})/2}=6\sqrt{2}\,,
$$
while by a similar argument as in (\ref{qc-examp}) we get $k_G(w_m, w_{m+1})\ge k_{\Rt \setminus E_m}(w_m, w_{m+1})>m$.

Thus,  $G$ is not  $\varphi$-uniform
for any $\varphi$ but clearly it is quasiconvex.
\hfill{$\triangle$}
\end{example}

\begin{openproblem} Does there exist a simply connected quasiconvex planar domain which is not
$\varphi$-uniform for any $\varphi$?
\end{openproblem}

\subsection*{Bilipschitz Property}
It is well-known that uniform domains are preserved under bilipschitz mappings (see
for instance \cite[p. 37]{Vu}). We now extend this property to the class of
$\varphi$-uniform domains.
\begin{proposition}\label{bilipschitz}
Let $f:\,\Rn\to \Rn$ be an $L$-bilipschitz mapping, that is
$$
|x-y|/L\le |f(x)-f(y)|\le L|x-y|
$$
for all $x,y\in \Rn$. If $G\psubset \Rn$ is $\varphi$-uniform, then
$f(G)$ is $\varphi_1$-uniform with $\varphi_1(t)=L^2\varphi(L^2t)$.
\end{proposition}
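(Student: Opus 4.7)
The plan is to show the three standard distortion facts for an $L$-bilipschitz homeomorphism, then substitute them into the $\varphi$-uniformity hypothesis on $G$.

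First, I would fix $u,v\in f(G)$ and write $u=f(x)$, $v=f(y)$ with $x,y\in G$. Since an $L$-bilipschitz map of $\Rn$ is a homeomorphism, it sends $\partial G$ onto $\partial f(G)$. Using this, I would show the boundary-distance comparison
\[
\delta_G(x)/L \;\le\; \delta_{f(G)}(u) \;\le\; L\,\delta_G(x),
\]
by picking a nearest boundary point of $G$ to $x$ (resp.\ of $f(G)$ to $u$) and applying the two-sided Lipschitz inequality to obtain each direction. The analogous inequality holds for $y$ and $v$.

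Next, I would establish the quasihyperbolic distortion estimate
\[
k_{f(G)}(u,v) \;\le\; L^2\, k_G(x,y).
\]
Given a rectifiable path $\gamma:[0,1]\to G$ from $x$ to $y$, the image $f\circ\gamma$ is a rectifiable path in $f(G)$ from $u$ to $v$, and the bilipschitz condition together with the $\delta$-comparison above gives
\[
\ell_k(f\circ\gamma) \;=\; \int_0^1 \frac{|(f\circ\gamma)'(t)|\,dt}{\delta_{f(G)}(f(\gamma(t)))} \;\le\; \int_0^1 \frac{L\,|\gamma'(t)|\,dt}{\delta_G(\gamma(t))/L} \;=\; L^2\,\ell_k(\gamma).
\]
Taking the infimum over $\gamma\in\Gamma(x,y)$ yields the desired bound.

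Finally, I would combine these with the $\varphi$-uniformity of $G$. The ratio transforms as
\[
\frac{|x-y|}{\min\{\delta_G(x),\delta_G(y)\}} \;\le\; \frac{L\,|u-v|}{\min\{\delta_{f(G)}(u),\delta_{f(G)}(v)\}/L} \;=\; L^2\,\frac{|u-v|}{\min\{\delta_{f(G)}(u),\delta_{f(G)}(v)\}},
\]
so by monotonicity of $\varphi$,
\[
k_{f(G)}(u,v) \;\le\; L^2\,k_G(x,y) \;\le\; L^2\,\varphi\!\left(L^2\,\frac{|u-v|}{\min\{\delta_{f(G)}(u),\delta_{f(G)}(v)\}}\right),
\]
which is exactly $\varphi_1(|u-v|/\min\{\delta_{f(G)}(u),\delta_{f(G)}(v)\})$ with $\varphi_1(t)=L^2\varphi(L^2 t)$. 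The function $\varphi_1$ is continuous, strictly increasing, and vanishes at $0$, so $f(G)$ is $\varphi_1$-uniform. No step here is really an obstacle; the only mild subtlety is justifying that $f$ maps $\partial G$ onto $\partial f(G)$, which follows from $f$ being a homeomorphism of $\Rn$.
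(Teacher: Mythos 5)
Your proposal is correct and follows essentially the same route as the paper: the boundary-distance comparison $\delta(z)/L\le\delta'(f(z))\le L\,\delta(z)$, the quasihyperbolic distortion bound $k_{f(G)}(f(x),f(y))\le L^2k_G(x,y)$ (which the paper simply cites as well known while you derive it from the path integral), and then substitution into the $\varphi$-uniformity of $G$ using monotonicity of $\varphi$. No gaps worth noting.
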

\begin{proof}
We denote $\delta(z):=d(z,\partial G)$ and $\delta'(w):=d(w,\partial f(G))$.
Since $f$ is $L$-bilipschitz, it follows that
$$\delta(z)/L\le \delta'(f(z))\le L\,\delta(z)
$$
for all $z\in G$. Also, we have the following well-known relation (see for instance \cite[p. 37]{Vu})
$$k_G(x,y)/L^2\le k_{f(G)}(f(x),f(y))\le L^2 k_G(x,y)
$$
for all $x,y\in G$. Hence, $\varphi$-uniformity of $G$ yields
\begin{eqnarray*}
k_{f(G)}(f(x),f(y)) &\!\! \le \!\!& L^2 k_G(x,y)\\
&\!\! \le \!\!& L^2 \varphi(|x-y|/\min\{\delta(x),\delta(y)\})\\
&\!\! \le \!\!& L^2 \varphi(L^2\,|f(x)-f(y)|/\min\{\delta'(f(x)),\delta'(f(y))\})\,.
\end{eqnarray*}
This concludes our claim.
\end{proof}

A mapping $h:\,\Rnbar\to \Rnbar$ defined by
$$h(x)=a+\frac{r^2(x-a)}{|x-a|^2},~h(\infty)=a,~h(a)=\infty
$$
is called an {\em inversion} in the sphere $S^{n-1}(a,r)$ for $x,a\in \Rn$ and $r>0$.
We recall the following well-known identity from \cite[(1.5)]{Vu}
\begin{equation}\label{inversion}
|h(x)-h(y)|=\frac{r^2|x-y|}{|x-a|\,|y-a|},\quad x,y\in\Rn\setminus \{a\}\,.
\end{equation}

We next show that $\varphi$-uniform domains are
preserved under inversion in a sphere.
\begin{corollary} 
Let $z_0\in \Rn$ and $R>0$ be arbitrary. Denote by $h$ an inversion in $S^{n-1}(z_0,R)$.
For $0<m<M$, if $G\subset B^n(z_0,M)\setminus\overline{B}^n(z_0,m)$ is a
$\varphi$-uniform domain, then $h(G)$ is $\varphi_1$-uniform with
$\varphi_1(t)=(M/m)^2\varphi(M^2t/m^2)$.
\end{corollary}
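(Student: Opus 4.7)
The plan is to reduce everything to the inversion identity (\ref{inversion}) together with a direct comparison of arc-length densities. Write $\delta(z) = d(z, \partial G)$ and $\delta'(w) = d(w, \partial h(G))$. Because $\overline{G}$ sits inside the closed shell $\overline{B}^n(z_0, M) \setminus B^n(z_0, m)$ and $z_0 \notin \overline{G}$, the map $h$ restricts to a homeomorphism of a neighborhood of $\overline{G}$ onto a neighborhood of $\overline{h(G)}$, so $h(G)$ is a domain and $\partial h(G) = h(\partial G)$.

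Step one. I apply (\ref{inversion}) to $x, y \in \overline{G}$. Since $m \le |x - z_0|, |y - z_0| \le M$, the two-sided bound
$$\frac{R^2}{M^2}\,|x - y| \;\le\; |h(x) - h(y)| \;\le\; \frac{R^2}{m^2}\,|x - y|$$
follows immediately, and taking the infimum over $y \in \partial G$ then yields
$$\frac{R^2}{M^2}\,\delta(z) \;\le\; \delta'(h(z)) \;\le\; \frac{R^2}{m^2}\,\delta(z) \quad \text{for every } z \in G.$$

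Step two. I compare the quasihyperbolic metrics. Using $|h'(z)| = R^2/|z - z_0|^2$, $|z - z_0| \ge m$, and the lower bound for $\delta'(h(z))$ just established, one has
$$\frac{|h'(z)|}{\delta'(h(z))} \;\le\; \frac{R^2/|z - z_0|^2}{R^2\,\delta(z)/M^2} \;\le\; \frac{M^2}{m^2}\cdot \frac{1}{\delta(z)}.$$
For a rectifiable $\gamma \in \Gamma(x,y)$, the change of variables $w = h(z)$ in the arc-length integral therefore gives $\ell_k(h \circ \gamma) \le (M/m)^2\,\ell_k(\gamma)$, and an infimum over $\gamma$ produces
$$k_{h(G)}(h(x), h(y)) \;\le\; (M/m)^2\, k_G(x, y).$$

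Step three. Combining Step two with the $\varphi$-uniformity of $G$, and using the lower inequality from Step one together with the upper bound for $\delta'(h(\cdot))$ to derive
$$\frac{|x - y|}{\min\{\delta(x), \delta(y)\}} \;\le\; \frac{M^2}{m^2}\cdot \frac{|h(x) - h(y)|}{\min\{\delta'(h(x)), \delta'(h(y))\}},$$
the monotonicity of $\varphi$ then yields
$$k_{h(G)}(h(x), h(y)) \;\le\; \left(\frac{M}{m}\right)^2 \varphi\!\left(\frac{M^2}{m^2}\cdot \frac{|h(x) - h(y)|}{\min\{\delta'(h(x)), \delta'(h(y))\}}\right) = \varphi_1\!\left(\frac{|h(x) - h(y)|}{\min\{\delta'(h(x)), \delta'(h(y))\}}\right).$$
The one spot that needs care is Step two, where the constant must emerge as $(M/m)^2$ rather than the pessimistic $(M/m)^4$ one would get by combining the one-sided bounds on $|h'(z)|$ and $\delta'(h(z))$ carelessly; the key is that the factor $R^2$ cancels exactly. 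An alternative route is to rescale $h$ by $mM/R^2$, observe that the resulting map is $(M/m)$-bilipschitz on the shell, and then quote Proposition \ref{bilipschitz}, but that approach requires extending the rescaled map to an $L$-bilipschitz self-map of $\Rn$, a step the direct argument above sidesteps.
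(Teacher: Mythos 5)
Your proposal is correct and follows essentially the same route as the paper: the inversion identity \eqref{inversion} gives the two-sided bounds on $|h(x)-h(y)|$ and on the boundary distances, these yield $k_{h(G)}(h(x),h(y))\le (M/m)^2k_G(x,y)$, and the conclusion is assembled exactly as in Proposition~\ref{bilipschitz}. The only difference is that you make explicit the density/arc-length computation behind the quasihyperbolic comparison (including the cancellation of $R^2$), which the paper merely asserts with ``It follows that.''
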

\begin{proof}
We denote $\delta(z):=d(z,\partial G)$ and $\delta'(w):=d(w,\partial h(G))$.
Without loss of generality we can assume that $z_0=0$. By the assumption on $G$ we see that
$m\le |z|\le M$ for all $z\in G$. Hence, by the identity (\ref{inversion}) we have
$$R^2|x-y|/M^2\le |h(x)-h(y)|\le R^2|x-y|/m^2
$$
which implies
$$R^2\min\{\delta(x),\delta(y)\}/M^2 \le \min\{\delta'(h(x),\delta'(y))\}
\le R^2\min\{\delta(x),\delta(y)\}/m^2.
$$
It follows that
$$(m/M)^2 k_G(x,y)\le k_{h(G)}(h(x),h(y)) \le (M/m)^2 k_G(x,y)
$$
for all $x,y\in G$. Since $G$ is $\varphi$-uniform, by a similar argument as
in the proof of Proposition~\ref{bilipschitz} we conclude our assertion.
\end{proof}
\section{Stability properties of $\varphi$-uniform domains}

Various classes of domains have been studied in analysis (e.g. see \cite{HPS}).
For some classes, the removal of a finite number of points from a domain may yield
a domain no longer in this class \cite{HPS}. Here we will investigate cases when
this does not happen, i.e. the removal of a finite number of points results a domain of the same class.

\begin{theorem}\label{phiunif-1pt}
For a fixed $\theta\in (0,1)$, consider the function $a(\theta)$ defined as in Lemma~$\ref{Vu2-lem}$.
If $G\psubset \Rn$ is a $\varphi_1$-uniform domain and $z_0\in G$, then
$G\setminus\{z_0\}$ is $\varphi$-uniform for some $\varphi$ depending on $\varphi_1$ only.
Moreover, we have
$$\varphi(t)=2\max\left\{\frac{\pi}{\log 3}\log (1+3t),a(\theta/4)\varphi_1(3t)\right\}\,.
$$
\end{theorem}
\begin{proof}
In this proof we denote by $\delta_1$ the Euclidean distance to the boundary of $G$ and
$\delta_2$ the Euclidean distance to that of $G\setminus\{z_0\}$.
Fix $\theta\in (0,1)$ and let $x,y\in G\setminus\{z_0\}$ be arbitrary.
We prove the theorem by considering three cases.

{\em Case I:} $x,y\in B^n(z_0,\theta\delta_1(z_0)/2)\setminus\{z_0\}$.

We see that
\begin{eqnarray*}
k_{G\setminus\{z_0\}}(x,y)
&\!\! = \!\!& k_{\Rn\setminus\{z_0\}}(x,y)\\
&\!\! \le \!\!& \frac{\pi}{\log 3} j_{\Rn\setminus\{z_0\}}(x,y)\\
&\!\! \le \!\! & \frac{\pi}{\log 3} j_{G\setminus\{z_0\}}(x,y)\,,
\end{eqnarray*}
where the equality follows (see \cite[page 38]{mo}) from the fact that
$z_0$ is the closest point for the geodesics $J_G[x,y] = J_{G \setminus {z_0}}[x,y]$
which are logarithmic spirals (or circular arcs) in $B^n(z_0,\theta \delta_1(z_0)/2)$
and the second inequality is due to Lind\'en \cite[Theorem~1.6]{Lin}.
It follows that
\begin{equation} 
k_{G\setminus\{z_0\}}(x,y) \le  \varphi_2(|x-y|/\min\{\delta_2(x),\delta_2(y)\})
\end{equation}
for $\varphi_2(t)=\frac{\pi}{\log 3}\log(1+t)$.

{\em Case II:} $x,y\in G\setminus B^n(z_0,\theta\delta_1(z_0)/4)$.

Since $G$ is $\varphi_1$-uniform, using Lemma~\ref{Vu2-lem} we obtain
\begin{eqnarray*}
k_{G\setminus\{z_0\}}(x,y)
&\!\! \le \!\!& a(\theta/4)k_G(x,y)\\
&\!\! = \!\!& a(\theta/4) \varphi_1(|x-y|/\min\{\delta_1(x),\delta_1(y)\})\\
&\!\! \le \!\! & a(\theta/4) \varphi_1(|x-y|/\min\{\delta_2(x),\delta_2(y)\}),
\end{eqnarray*}
where the last inequality holds because $\delta_1\ge \delta_2$. This gives that
\begin{equation} 
k_{G\setminus\{z_0\}}(x,y) \le  \varphi_3(|x-y|/\min\{\delta_2(x),\delta_2(y)\})
\end{equation}
with $\varphi_3(t)=a(\theta/4)\varphi_1(t)$.

{\em Case III:} $x\in B^n(z_0,\theta\delta_1(z_0)/4)\setminus\{z_0\}$ and
$y\in G\setminus B^n(z_0,\theta\delta_1(z_0)/2)$\,.

There exists a quasihyperbolic geodesic joining $x$ and $y$
that intersects the boundary of $B^n(z_0,\theta\delta_1(z_0)/4)$.
Let an intersecting point be $m$. Along this geodesic we have the
following equality
\begin{equation} \label{phiunif-eq3}
k_{G\setminus\{z_0\}}(x,y)=k_{G\setminus\{z_0\}}(x,m)+k_{G\setminus\{z_0\}}(m,y)\,.
\end{equation}
Now, {\em Case I} and {\em Case II}
respectively give
$$k_{G\setminus\{z_0\}}(x,m)\le \varphi_2(|x-m|/\min\{\delta_2(x),\delta_2(m)\})
$$
and
$$k_{G\setminus\{z_0\}}(m,y)\le \varphi_3(|m-y|/\min\{\delta_2(m),\delta_2(y)\})\,.
$$
We note that $\max\{|x-m|,|m-y|\}\le 3|x-y|$ and $\delta_2(m)\ge \delta_2(x)$.
Also, $\varphi_2$ and $\varphi_3$ being monotone, from (\ref{phiunif-eq3}) we obtain
\begin{eqnarray*}
k_{G\setminus\{z_0\}}(x,y)
&\!\! \le \!\!& \varphi_2(3|x-y|/\min\{\delta_2(x),\delta_2(y)\})
+\varphi_3(3|x-y|/\min\{\delta_2(x),\delta_2(y)\})\\
&\!\! \le \!\!& 2\max\{\varphi_2(3|x-y|/\min\{\delta_2(x),\delta_2(y)\}),
\varphi_3(3|x-y|/\min\{\delta_2(x),\delta_2(y)\})\}\\
&\!\! = \!\!& \varphi_4(|x-y|/\min\{\delta_2(x),\delta_2(y)\})\},
\end{eqnarray*}
where $\varphi_4(t)=2\max\{\varphi_2(3t),\varphi_3(3t)\}$.

We verified all the cases, and hence our conclusion holds with $\varphi=\varphi_4$.
\end{proof}
\begin{corollary}
Suppose that $\{z_1,z_2,\ldots,z_m\}$ is a finite non-empty set of points in
a domain $G\psubset \Rn$. If $G$ is $\varphi_0$-uniform,
then $G\setminus\{z_1,z_2,\ldots,z_m\}$ is $\varphi$-uniform for some $\varphi$
depending on $\varphi_0$, $m$ and the distance
$\min\{d(z_i,\partial G),|z_i-z_j|\}$ with $i\neq j$, $i,j=1,2,\ldots,m$.
\end{corollary}
\begin{proof}
As a consequence of Theorem~\ref{phiunif-1pt}, proof follows by induction on $m$.
Indeed, we obtain
$$\varphi(t)=2^m a(\theta/2)^{m-1}\max\{\pi(1+3t)/\log 3,a(\theta/2)\varphi_0(3t)\},
$$
where $\theta=\min\{d(z_i,\partial G),|z_i-z_j|\}$ with $i\neq j$ and $i,j=1,2,\ldots,m$.
\end{proof}
The following property of uniform domains, first noticed by
V\"ais\"al\"a (see \cite[Theorem~5.4]{Va88}) in a different
approach, is a straightforward consequence of
Theorem~\ref{phiunif-1pt}. For convenient reference we record the
following Bernoulli inequality:
\begin{equation}\label{Bernoulli}
\log(1+at)\le a\log(1+t); ~a\ge 1,~t\ge 0\,.
\end{equation}

\begin{corollary}
Suppose that $\{z_1,z_2,\ldots,z_m\}$ is a finite non-empty set of points
in a uniform domain $G\psubset \Rn$. Then
$G'=G\setminus\{z_1,z_2,\ldots,z_m\}$ also is uniform. More
precisely if $(\ref{C-uniform})$ holds for $G$ with some constant $C,$
then it also holds for $G'$ with a constant $C'$ depending on
$C$ and $m$.
\end{corollary}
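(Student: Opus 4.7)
The plan is to iterate Theorem~\ref{phiunif-1pt} (as in the preceding corollary) and verify that when the hypothesis on $G$ is strengthened from $\varphi_0$-uniformity to ordinary $c$-uniformity, the modulus produced on $G' = G\setminus\{z_1,\ldots,z_m\}$ is itself of the form $t \mapsto c'\log(1+t)$, which by the definition of $j_{G'}$ is exactly (\ref{C-uniform}).

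For the one-point case, I would start by noting that $c$-uniformity of $G$ is equivalent to $\varphi_0$-uniformity with $\varphi_0(t) = c\log(1+t)$. Tracing through the three cases in the proof of Theorem~\ref{phiunif-1pt} with this specific $\varphi_0$ yields $\varphi_2(t) = (\pi/\log 3)\log(1+t)$, $\varphi_3(t) = a(\theta/2)\,c\,\log(1+t)$, and finally $\varphi_4(t) = 2\max\{\varphi_2(3t),\varphi_3(3t)\}$, each of which is a constant multiple of $\log(1+3t)$. The Bernoulli inequality (\ref{Bernoulli}) then gives $\log(1+3t) \le 3\log(1+t)$, so $\varphi_4(t) \le c_1 \log(1+t)$ with $c_1$ depending only on $c$ (after fixing, say, $\theta = 1/2$). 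This is precisely (\ref{C-uniform}) for $G\setminus\{z_1\}$ with constant $c_1$.

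The general case would then follow by induction on $m$: assuming $G\setminus\{z_1,\ldots,z_{k-1}\}$ is $c_{k-1}$-uniform with $c_{k-1}$ depending only on $c$, one applies the one-point result to this domain and the removed point $z_k$ to conclude $c_k$-uniformity of $G\setminus\{z_1,\ldots,z_k\}$ with $c_k$ depending only on $c_{k-1}$, hence only on $c$. After $m$ steps the claim is established. The only real obstacle is bookkeeping the constants; the structural point is that the family $\{t \mapsto C\log(1+t):C>0\}$ is closed (up to a multiplicative absorption via (\ref{Bernoulli})) under all the operations---scaling, taking maxima, and composition with $t\mapsto 3t$---that arise in the proof of Theorem~\ref{phiunif-1pt}.
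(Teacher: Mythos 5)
Your proposal is correct and follows essentially the same route as the paper: specialize Theorem~\ref{phiunif-1pt} to $\varphi_0(t)=c\log(1+t)$, observe that each of the moduli produced in Cases I--III is a constant multiple of $\log(1+3t)$, absorb the factor $3$ via the Bernoulli inequality (\ref{Bernoulli}) to get a constant multiple of $j_{G\setminus\{z_1\}}$, and then induct on $m$. The paper does exactly this bookkeeping, arriving at the explicit constant $6^m a(\theta/2)^m c$.
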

\begin{proof}
It is enough to consider the domain $G\setminus\{z_1\}$ when (\ref{C-uniform}) holds for
$G$ with some constant $C$.
We refer to the proof of Theorem~\ref{phiunif-1pt}. Our aim is to find a constant $C'$ such that
$$k_{G\setminus\{z_1\}}(x,y)\le C'\,j_{G\setminus\{z_1\}}(x,y)\,.
$$
From {\it Case I}, we have $C'=\pi/\log 3$. Since (\ref{C-uniform}) holds for
$G$ with the constant $C$, from {\it Case II} we get $C'=C\,a(\theta/2)$.

By {\it Case I} and {\it Case II}, we see that
\begin{eqnarray*}
k_{G\setminus\{z_1\}}(x,y) &\!\! = \!\!& k_{G\setminus\{z_1\}}(x,m)+k_{G\setminus\{z_1\}}(m,y)\\
&\!\! \le \!\!& \max\{\pi/\log 3,Ca(\theta/2)\}\,[j_{G\setminus\{z_1\}}(x,m)+j_{G\setminus\{z_1\}}(m,y)]\\
&\!\! \le \!\!& C'\,j_{G\setminus\{z_1\}}(x,y)\,,
\end{eqnarray*}
where $C'=6\,\max\{\pi/\log 3,C\,a(\theta/2)\}$. Note that the last inequality follows by
similar reasoning as in {\it Case III} 
and by the Bernoulli inequality (\ref{Bernoulli}).

Inductively, we notice that uniformity constant for $G'$ is
$$6^m a(\theta/2)^{m-1}\max\{\pi/\log 3,C\,a(\theta/2)\}=6^m a(\theta/2)^m C\,,
$$
where $a(\theta)$ is defined as in Lemma~\ref{Vu2-lem} for $\theta\in (0,1)$.
\end{proof}


\begin{theorem}\label{phiunif-E}
Let $\theta\in(0,1)$ be fixed. Assume that $G\psubset\Rn$ is $\varphi_1$-uniform and $z_0\in G$.
If $E\subset B^n(z_0,\theta d(z_0,\partial G)/5)$ is a non-empty closed set such that
$\Rn\setminus E$ is $\varphi_2$-uniform, then $G\setminus E$ is $\varphi$-uniform
for $\varphi$ depending on $\varphi_1$ and $\varphi_2$. 
\end{theorem}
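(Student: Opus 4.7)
Mimic the three-case proof of Theorem~\ref{phiunif-1pt}, treating the closed set $E$ as the analogue of $\{z_0\}$. Set $r_0 = d(z_0,\partial G)$, $\delta_1(w)=d(w,\partial G)$, and $\delta_2(w)=\min\{\delta_1(w), d(w,E)\}$, and given $x,y\in G\setminus E$ distinguish: Case~I, both $x,y\in B^n(z_0,\theta r_0/2)$; Case~II, both $x,y\in G\setminus B^n(z_0,\theta r_0/4)$; Case~III, one in each.

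For Case~II, the identity $\theta r_0/5=(4/5)(\theta r_0/4)$ and the hypothesis $E\subset\overline{B}^n(z_0,\theta r_0/5)$ place us in the setting of Lemma~\ref{genVu2-lem} with $\alpha=4/5$ and $\theta'=\theta/4$, giving $k_{G\setminus\overline{B}^n(z_0,\theta r_0/5)}(x,y)\le a(4/5,\theta/4)\,k_G(x,y)$; together with $k_{G\setminus E}\le k_{G\setminus\overline{B}^n(z_0,\theta r_0/5)}$, the $\varphi_1$-uniformity of $G$, and $\delta_1\ge\delta_2$, this delivers a bound of the required form. For Case~III, I pick $m\in\partial B^n(z_0,\theta r_0/4)$ on a $k_{G\setminus E}$-geodesic from $x$ to $y$; then $(x,m)$ falls under Case~I and $(m,y)$ under Case~II. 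Additivity along the geodesic, $\max\{|x-m|,|m-y|\}\le 3|x-y|$, the ratio bound $\min\{\delta_2(x),\delta_2(m)\}\ge \delta_2(x)/9$ (which follows from $\delta_2(m)\ge\theta r_0/20$ and $\delta_2(x)\le 9\theta r_0/20$), and monotonicity of the Case~I,~II bounds then combine as in the proof of Theorem~\ref{phiunif-1pt}.

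The substantive Case~I exploits the $\varphi_2$-uniformity of $\Rn\setminus E$. On $B^n(z_0,\theta r_0/2)$ one has $\delta_1(w)\ge r_0(2-\theta)/2$ and $d(w,E)\le 7\theta r_0/10$, so the densities $1/\delta_2(w)$ and $1/d(w,E)$ of $k_{G\setminus E}$ and $k_{\Rn\setminus E}$ differ by a factor at most $7\theta/(5(2-\theta))\le 7/5$ and agree on the sub-ball $B^n(z_0, r_0(1-\theta/5)/2)$. The $\varphi_2$-uniformity gives
\[
k_{\Rn\setminus E}(x,y)\le\varphi_2\!\left(\frac{|x-y|}{\min\{d(x,E),d(y,E)\}}\right)=\varphi_2\!\left(\frac{|x-y|}{\min\{\delta_2(x),\delta_2(y)\}}\right),
\]
and I transfer this to a bound on $k_{G\setminus E}(x,y)$ by taking a near-minimizing path for $k_{\Rn\setminus E}$ and, whenever it exits the density-agreement sub-ball, reconnecting through an arc on the sphere $\partial B^n(z_0,r_0(1-\theta/5)/2)$; since $d(w,E)\ge r_0(5-3\theta)/10$ on this sphere, the $k_{G\setminus E}$-length of the arc is uniformly bounded in terms of $\theta$.

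The main obstacle is in Case~I: the sphere-arc reconnection introduces an additive constant that must be reconciled with the requirement $\varphi(0)=0$ of a Lipschitzian modulus. This is handled by separating small ratios $|x-y|/\min\{\delta_2(x),\delta_2(y)\}$ (where $j_{\Rn\setminus E}\le k_{\Rn\setminus E}$ together with the Bernoulli inequality~\eqref{Bernoulli} keep near-geodesics inside the density-agreement ball so that no reconnection is needed) from large ratios (where the additive constant is dominated by the $\varphi_2$-bound). Taking the final $\varphi$ to be the maximum of the Case~I--III bounds after the Case~III rescaling yields a Lipschitzian modulus depending only on $\varphi_1$, $\varphi_2$, and $\theta$.
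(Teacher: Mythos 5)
Your proposal follows essentially the same route as the paper's proof: the same three-region decomposition modelled on Theorem~\ref{phiunif-1pt}, Lemma~\ref{genVu2-lem} for the pair far from $E$ (your parameters $\alpha=4/5$, $\theta'=\theta/4$ are in fact the ones for which $E\subset\overline{B}^n(z_0,\alpha\theta' r_0)$, so the monotonicity step $k_{G\setminus E}\le k_{G'}$ genuinely holds), a comparison of $k_{G\setminus E}$ with $k_{{\mathbb R}^n\setminus E}$ via a bounded spherical detour together with a separate small-ratio subcase handled by the $j\le k$ and Bernoulli estimates near $E$, and geodesic splitting for the mixed case. The only slip is in your Case~I: when $\theta>5/6$ the ball $B^n(z_0,\theta r_0/2)$ is not contained in the density-agreement sub-ball $B^n(z_0,r_0(1-\theta/5)/2)$, so $x$ and $y$ may lie outside it and the reconnection as described is undefined; this is repaired by reconnecting on $\partial B^n(z_0,\theta r_0/2)$ instead, where $\delta_2\ge 3\theta r_0/10$ (so the detour arc still has $k_{G\setminus E}$-length at most $5\pi/3$) and where your factor-$7/5$ density comparison already controls the retained pieces of the near-minimizing path.
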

\begin{proof}
In this proof we denote by $\delta_1$, $\delta_2$ and $\delta_3$ the Euclidean distances to
the boundary of $G$, $G\setminus E$ and $\Rn \setminus E$ respectively.
Let $\theta\in (0,1)$ and $x,y\in G\setminus E$ be arbitrary.
We subdivide the proof into several cases. 

{\em Case A:} $x,y\in G\setminus B^n(z_0,\theta\delta_1(z_0)/4)$.

Denote $G'$ as in Lemma~\ref{genVu2-lem} but with $\alpha=1/5$.
Then $\varphi_1$-uniformity of $G$ gives
\begin{eqnarray*}
k_{G\setminus E}(x,y)
&\!\! \le \!\!& k_{G'}(x,y)\\
&\!\! \le \!\!& a(1/5,\theta/4)k_G(x,y)\\
&\!\! \le \!\!& a(1/5,\theta/4) \varphi_1(|x-y|/\min\{\delta_1(x),\delta_1(y)\})\\
&\!\! \le \!\!& a(1/5,\theta/4) \varphi_1(|x-y|/\min\{\delta_2(x),\delta_2(y)\})\,,
\end{eqnarray*}
where the first inequality holds by the monotonicity property, second inequality
follows by Lemma~\ref{genVu2-lem} and last follows trivially.

{\em Case B:} $x,y\in B^n(z_0,\theta\delta_1(z_0)/2)\setminus E$.

If $x,y\in B^n(z_0,\theta\delta_1(z_0)/4)\setminus E$, then
the quasihyperbolic geodesic $J:=J_{G\setminus E}[x,y]$
may entirely lie in $B^n(z_0,\theta\delta_1(z_0)/3)$
or may intersect the sphere $S^{n-1}(z_0,\theta\delta_1(z_0)/3)$.
This means that the shape of $J$ will depend on the shape of $E$.
So, we divide the case into two parts.

{\em Case B1:} $J\cap S^{n-1}(z_0,\theta\delta_1(z_0)/3)=\emptyset$.

Note that for all $z \in J$, the closest boundary points to $z$ are in $E$, 
and thus, $J$ is also the quasihyperbolic geodesic $J_{\Rn \setminus \{ E \}}[x,y]$. 
Since $\Rn\setminus E$ is $\varphi_2$-uniform we have
\begin{equation}\label{phiunif-eq4}
k_{G\setminus E}(x,y)=k_{\Rn\setminus E}(x,y)
\le \varphi_2(|x-y|/\min\{\delta_2(x),\delta_2(y)\})\,.
\end{equation}

{\em Case B2:} $J \cap S^{n-1}(z_0,\theta\delta_1(z_0)/3)\neq \emptyset$.

To get a conclusion like in (\ref{phiunif-eq4}) it is enough to show that
\begin{equation}\label{phiunif-eq5}
k_{G\setminus E}(x,y)\le C\,k_{\Rn\setminus E}(x,y)
\end{equation}
for some constant $C>0$.

{\em Case B2a:} $x,y\in B^n(z_0,\theta\delta_1(z_0)/4)\setminus E$ and
$k_{\Rn\setminus E}(x,y)>\log\frac{3}{2}$.

Let $x_1$ be the first intersection point of $J$ with $S^{n-1}(z_0,\delta_1(z_0)/3)$
when we traverse along $J$ from $x$ to $y$. Similarly, we define $x_2$ when we traverse
from $y$ to $x$ (see Figure~\ref{ksv-fig4}).
\begin{figure}
\centering
\includegraphics{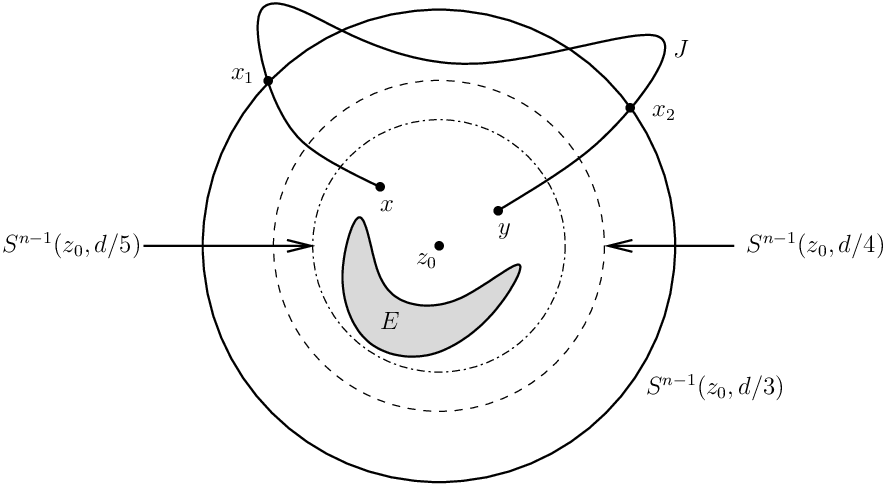}
\caption{The geodesic $J$ intersects $S^{n-1}(z_0,d/3)$ at $x_1$ and $x_2$, $d=\theta\delta_1(z_0)$.
\label{ksv-fig4}}
\end{figure}
In a similar fashion, let us denote $y_1$ and $y_2$ the first intersection points of
$J_{\Rn\setminus E}[x,y]$ with $S^{n-1}(z_0,\delta_1(z_0)/3)$ along both the directions
respectively. We observe that $\delta_2(z)=\delta_3(z)$ for all $z\in J[x,x_1]$, where
$J[x,x_1]$ denotes part of $J$ from $x$ to $x_1$. Hence,  along the geodesic $J$
we have
\begin{equation}\label{phiunif-eq6}
k_{G\setminus E}(x,y)=k_{\Rn\setminus E}(x,x_1)+k_{G\setminus E}(x_1,x_2)+k_{\Rn\setminus E}(x_2,y)\,.
\end{equation}
Now, by the triangle inequality we see that
\begin{equation}\label{phiunif-eq7}
k_{\Rn\setminus E}(x,x_1)\le k_{\Rn\setminus E}(x,y_1)+k_{\Rn\setminus E}(y_1,x_1)\,.
\end{equation}
By comparing the quasihyperbolic distance along the circular path joining $y_1$ and $x_1$,
we obtain
\begin{equation}\label{phiunif-eq8}
k_{\Rn\setminus E}(y_1,x_1)\le 4\pi\,.
\end{equation}
On the other hand, we see that
\begin{equation}\label{phiunif-eq9}
k_{\Rn\setminus E}(x,y_1)\ge j_{\Rn\setminus E}(x,y_1)\ge \log \frac{8}{7}\,,
\end{equation}
because $|x-y_1|\ge \theta\delta_1(z_0)/12$ and $\delta_3(y_1)\le 7\theta\delta_1(z_0)/12$.
Combining (\ref{phiunif-eq8}) and (\ref{phiunif-eq9}), from (\ref{phiunif-eq7}) we obtain
$$k_{\Rn\setminus E}(x,x_1)\le \Big(1+\frac{4\pi}{\log \frac{8}{7}}\Big)k_{\Rn\setminus E}(x,y_1)\,.
$$
Similarly we get
$$k_{\Rn\setminus E}(x_2,y)\le \left(1+\frac{4\pi}{\log \frac{8}{7}}\right)k_{\Rn\setminus E}(y_2,y)\,.
$$
A similar argument as in (\ref{phiunif-eq8}) and the last two inequalities together
with (\ref{phiunif-eq6}) give
$$k_{G\setminus E}(x,y)\le 4\pi+ \left(1+\frac{4\pi}{\log \frac{8}{7}}\right) k_{\Rn\setminus E}(x,y)\,.
$$
So, by the assumption in this case, the inequality (\ref{phiunif-eq5}) follows from the last inequality with
the constant $C=1+(4\pi/\log \frac{8}{7})+(4\pi/\log \frac{3}{2})$.

{\em Case B2b:} $x,y\in B^n(z_0,\theta\delta_1(z_0)/4)\setminus E$ and
$k_{\Rn\setminus E}(x,y)\le \log\frac{3}{2}$.

The well-known inequality $j_{\Rn\setminus E}(x,y)\le k_{\Rn\setminus E}(x,y)$ reduces to
\begin{equation}\label{phiunif-eq10}
R:=\frac{1}{2}\min\{\delta_3(x),\delta_3(y)\}\ge |x-y|\,.
\end{equation}
Without loss of generality we assume that $\min\{\delta_3(x),\delta_3(y)\}=\delta_3(x)$.
Then there exists a point $x_0\in S^{n-1}(x,2R)\cap \partial E$ such that $\delta_3(x)=|x-x_0|=2R$.
For the proof of (\ref{phiunif-eq5}), we proceed as follows
\begin{eqnarray*}
k_{G\setminus E}(x,y)
&\!\! \le \!\!& k_{B^n(x,2R)}(x,y)\\
&\!\! \le \!\!& 2 j_{B^n(x,2R)}(x,y)\\
&\!\! = \!\!& 2 \log\left(1+\frac{|x-y|}{\delta_3(x)-|x-y|}\right)\\
&\!\! \le \!\!& 2 \log\left(1+\frac{2|x-y|}{\delta_3(x)}\right)\\
&\!\! \le \!\!& 4 \log\left(1+\frac{|x-y|}{\delta_3(x)}\right)\\
&\!\! = \!\!& 4 j_{\Rn\setminus \{x_0\}}(x,y)\\
&\!\! \le \!\!& 4 j_{\Rn\setminus E}(x,y)\,,
\end{eqnarray*}
where the second, third and fourth inequalities follow from \cite[Lemma~7.56]{AVV}, (\ref{phiunif-eq10})
and (\ref{Bernoulli}) respectively. Hence we proved {\em Case B} when
$x,y\in B^n(z_0,\theta\delta_1(z_0)/4)\setminus E$.

If $x\in B^n(z_0,\theta\delta_1(z_0)/4)\setminus E$ and
$y\in B^n(z_0,\theta\delta_1(z_0)/3)\setminus \overline{B}^n(z_0,\theta\delta_1(z_0)/4)$,
by considering a sphere $S^{n-1}(z_0,\theta\delta_1(z_0)r)$ with $r\in(1/4,1/3)$ we
proceed like before.

If $x\in B^n(z_0,\theta\delta_1(z_0)/4)\setminus E$ and
$y\in B^n(z_0,\theta\delta_1(z_0)/2)\setminus \overline{B}^n(z_0,\theta\delta_1(z_0)/3)$,
then the geodesic $J_{G\setminus E}[x,y]$ will intersect $S^{n-1}(z_0,\theta\delta_1(z_0)/3)$.
Let $m$ be the first intersection point when we traverse along the geodesic from $x$ to $y$.
Then along the geodesic we have
\begin{eqnarray*}
k_{G\setminus E}(x,y)
&\!\! = \!\!& k_{G\setminus E}(x,m)+k_{G\setminus E}(m,y)\\
&\!\! \le \!\!& k_{\Rn\setminus E}(x,m)+
 a(1/4,\theta/3)\varphi_1(|m-y|/\min\{\delta_2(m),\delta_2(y)\})\\
&\!\! \le \!\!& \varphi_2(|x-m|/\min\{\delta_3(x),\delta_3(m)\})+
 a(1/4,\theta/3)\varphi_1(|m-y|/\min\{\delta_2(m),\delta_2(y)\})\\
&\!\! \le \!\!& \varphi_2(4|x-y|/\min\{\delta_2(x),\delta_2(y)\})+
 a(1/4,\theta/3)\varphi_1(10|x-y|/\min\{\delta_2(x),\delta_2(y)\})\,,
\end{eqnarray*}
where the first and second inequalities follow by {\em Case A} and the assumption on
$E$ respectively. Thus, we conclude that if $x,y\in B^n(z_0,\theta\delta_1(z_0)/2)\setminus E$,
then
$$k_{G\setminus E}(x,y)\le 2a(1/4,\theta/3)\varphi_3(|x-y|/\min\{\delta_2(x),\delta_2(y)\})\,,
$$
with $\varphi_3(t)=\max\{\varphi_2(10t),\varphi_1(10t)\}$.

{\em Case C:} $x\in B^n(z_0,\theta\delta_1(z_0)/4)\setminus E$ and
$y\in G\setminus B^n(z_0,\theta\delta_1(z_0)/2)$.

Let $p\in J_{G\setminus E}[x,y]\cap S^{n-1}(z_0,\theta\delta_1(z_0)/4)$. Then we see that
\begin{eqnarray*}
k_{G\setminus E}(x,y)
&\!\! = \!\!& k_{G\setminus E}(x,p)+k_{G\setminus E}(p,y)\\
&\!\! \le \!\!& 2a(1/4,\theta/3)\varphi_3(|x-p|/\min\{\delta_2(x),\delta_2(p)\})\\
&&\qquad +a(1/5,\theta/4) \varphi_1(|p-y|/\min\{\delta_2(p),\delta_2(y)\})\\
&\!\! \le \!\!& 2a(1/4,\theta/3)\varphi_3(|x-p|/\min\{\delta_2(x),\delta_2(y)\})\\
&& \qquad +a(1/5,\theta/4) \varphi_1(|p-y|/\min\{\delta_2(x),\delta_2(y)\})\,,
\end{eqnarray*}
where the first inequality holds by {\em Case B} and {\em Case A}, and last
holds by a similar argument as above (or as in the proof of {\em Case III} in Theorem~\ref{phiunif-1pt}).
It is easy to see that
$$\max\{|x-p|,|p-y|\}\le 3|x-y|\,.
$$
In the same way, as in Theorem~\ref{phiunif-1pt}, the monotonicity property of $\varphi_3$ and
$\varphi_1$ gives
\begin{eqnarray*}
k_{G\setminus E}(x,y)
&\!\! \le \!\!& 4\,a(1/4,\theta/3)\max\{\varphi_2(30|x-y|/\min\{\delta_2(x),\delta_2(y)\}),\\
&&\hspace{4cm} \varphi_1(30|x-y|/\min\{\delta_2(x),\delta_2(y)\})\}.
\end{eqnarray*}

By combining all the above cases, a simple computation concludes that the domain $G\setminus E$ is $\varphi$-uniform
for $\varphi(t)=4\,a(1/4,\theta/3)\max\{\varphi_1(30t),\varphi_2(30t)\}$, where
$a(1/4,\theta/3)$ is obtained from Lemma~\ref{genVu2-lem}.
\end{proof}
\begin{corollary}\label{coro43}
Fix $\theta\in(0,1)$. Assume that $G\psubset\Rn$ is $\varphi_0$-uniform and $(z_i)_{i=1}^m$
are non-empty finite set of points in $G$ such that $\delta(z_1)=\min \{\delta(z_i)\}_{i=1}^m$.
Denote
$$ d:=\min_{i\neq j}\{|z_i-z_j|/2\} ~~\mbox{and}~~ \delta:=\min\{\delta(z_1),d\}\,.
$$
If all $E_i$, $i=1,2,\ldots,m$, are non-empty closed sets in
$B^n(z_i,\theta \delta/5)$ such that $\Rn\setminus \bigcup_{i=1}^m E_i$ is $\varphi_1$-uniform
for some $\varphi_1$, then the domain $G\setminus\bigcup_{i=1}^m E_i$ is $\varphi$-uniform for some $\varphi$.
\end{corollary}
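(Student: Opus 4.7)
The plan is to directly adapt the proof of Theorem~\ref{phiunif-E}, treating the $m$ centers $z_1,\ldots,z_m$ as simultaneous local obstructions rather than proceeding by induction on $m$. The disjointness of the balls $B^n(z_k,\theta\delta/5)$, which follows from the spacing condition $\delta\le d$, ensures that each cluster is geometrically isolated: near any fixed $z_k$ the domain $G\setminus\bigcup E_i$ looks locally like $\Rn\setminus\bigcup E_i$, while far from every $z_k$ it looks like $G$ with $m$ small forbidden balls. Fixing $x,y\in G\setminus\bigcup E_i$, I would split the argument into three cases paralleling those of Theorem~\ref{phiunif-E}: Case~A, both points in the outer region $G\setminus\bigcup_k B^n(z_k,\theta\delta/4)$; Case~B${}_k$, both in the inner region $B^n(z_k,\theta\delta/2)$ for some single $k$; and Case~C, transitional configurations with $x,y$ in different regions (one near some $z_k$ and the other either far or near a different $z_{k'}$).

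For Case~A I would iterate Lemma~\ref{genVu2-lem} once per center, replacing $E_k$ by the slightly larger ball $\overline{B}^n(z_k,\theta\delta/5)$ and using the domain monotonicity of $k_G$ to obtain
\[
 k_{G\setminus\bigcup E_i}(x,y)\le k_{G\setminus\bigcup_k\overline{B}^n(z_k,\theta\delta/5)}(x,y)\le a(\alpha,\theta')^m\,k_G(x,y)\,,
\]
where $\alpha,\theta'$ are chosen so that each removed ball has radius $\theta\delta/5$; combining this with the $\varphi_0$-uniformity of $G$ and the inequality $\delta_G\ge\delta_{G\setminus\bigcup E_i}$ yields the required bound. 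For Case~B${}_k$ the argument is a carbon copy of Case~B of Theorem~\ref{phiunif-E}: on $B^n(z_k,\theta\delta/3)$ one checks that $\delta_{G\setminus\bigcup E_i}(z)=d(z,E_k)=\delta_{\Rn\setminus\bigcup E_i}(z)$ along any geodesic which stays in this ball, because $\partial G$ and the remaining $E_j$ lie at distance at least $\delta(1-\theta/3)$ and $2\delta-8\theta\delta/15$ respectively, so that the $\varphi_1$-uniformity of $\Rn\setminus\bigcup E_i$ applies directly; escaping geodesics are handled by the intermediate-point trick of Case~B2 in Theorem~\ref{phiunif-E}. Case~C is then handled by choosing intermediate points on spheres $S^{n-1}(z_k,\theta\delta r)$ and splicing estimates from Cases~A and~B${}_k$, exactly as in Case~III of Theorem~\ref{phiunif-1pt}.

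The main obstacle is the iterated application of Lemma~\ref{genVu2-lem} in Case~A. After the first removal the ambient domain becomes $G_1=G\setminus\overline{B}^n(z_1,\theta\delta/5)$, and one must re-verify that $z_2,\ldots,z_m$ and the points $x,y$ still satisfy the hypothesis of Lemma~\ref{genVu2-lem} with respect to the modified distance $d(\cdot,\partial G_1)$. The pairwise separation $|z_k-z_j|\ge 2d\ge 2\delta$ combined with $\theta\delta/5\ll\delta$ ensures that this continues to hold at every step, though with slightly degraded parameters; the bookkeeping produces the factor $a(\alpha,\theta)^m$ in the final constant. Assembling the three cases, the modulus of continuity can be taken in the form $\varphi(t)=c_m\,\max\{\varphi_0(C_m t),\varphi_1(C_m t)\}$ for explicit constants $c_m,C_m$ depending on $m$, $\theta$, and $a(\alpha,\theta)$ from Lemma~\ref{genVu2-lem}, in direct analogy with the explicit formulas displayed in the two preceding corollaries.
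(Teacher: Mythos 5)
Your argument is essentially correct, but it takes a genuinely different route from the paper: the paper disposes of this corollary in one line, by induction on $m$ using Theorem~\ref{phiunif-E} (remove one $E_i$ at a time), whereas you rework the whole case analysis of Theorem~\ref{phiunif-E} with all $m$ clusters treated simultaneously. Your route costs more bookkeeping --- in particular the extra transitional sub-case of Case~C where $x$ lies near $z_k$ and $y$ near $z_{k'}$ with $k\neq k'$, and the iterated application of Lemma~\ref{genVu2-lem} in Case~A, where the parameter $\theta'$ at the $k$-th removal depends on $d(z_k,\partial G_{k-1})$ and hence on the configuration, not just on $m$ and $\theta$ (this is harmless, since the statement only asks for \emph{some} $\varphi$). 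But it also buys something real: the inductive route needs, at each step, that $\Rn\setminus E_i$ is $\varphi$-uniform for the \emph{single} set $E_i$, which is not literally among the hypotheses (only $\Rn\setminus\bigcup_i E_i$ is assumed $\varphi_1$-uniform) and must be extracted by a separate localization argument; your simultaneous treatment uses the $\varphi_1$-uniformity of $\Rn\setminus\bigcup_i E_i$ only locally near each $z_k$, where your distance computations ($d(z,\partial G)\ge\delta(1-\theta/3)$ and $d(z,E_j)\ge\delta(2-8\theta/15)$ for $j\neq k$, both exceeding $d(z,E_k)\le 8\theta\delta/15$) correctly show that the relevant boundary distances agree, so no such auxiliary step is needed. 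The separation hypothesis $\delta\le d$ enters your proof through this disjointness/locality, which is the right place for it.
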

\begin{proof}
As a consequence of Theorem~\ref{phiunif-E} the proof follows by
induction.
\end{proof}


What we consider above are removing finite number of points or sets from a domain in a class yields a domain in
the same class. In the following,
we would like to consider the case of removing infinite number of points or sets from a domain, but in the geometric sequence.
We first introduce a lemma (see \cite[Theorem 2.23]{Va98}) which is used latter in this section.

\begin{lemma}\label{lem4}
Suppose that $\gamma$ is a $c$-uniform arc in $D$ with end points $a,b$.
Then $$k_D(a,b)\leq 7c^3 \log\Big(1+\frac{|a-b|}{\delta_D(a)\wedge \delta_D(b)}\Big).
$$
\end{lemma}


As a consequence of \cite[Theorem~5.4]{Va88}, one can prove that
\begin{theorem}\label{uniform1}
Let $\{x_k\}_{k=1}^{\infty}$ be a sequence of points in $B^n(x_0,r)$ satisfying:  $x_k\in[x_0,x_{k-1})$
and $|x_0-x_k|=\frac{1}{2^{k+2}}r.$ Denote
$E=\{x_0\}\cup\{x_k\}_{k=1}^{\infty}\,.
$
Then there exists some constant $c$ such that $B^n(x_0,r)\setminus E$ is $c$-uniform.
\end{theorem}

We now provide a similar result in the case of $\varphi$-uniform domains.

\begin{theorem}\label{psiunif2}
Suppose that $D\psubset \mathbb{R}^n$ is a $\varphi$-uniform domain and $x_0\in D$.
let $x_0\in D$, and $\{x_k\}_{k=1}^{\infty}$  be a sequence of points in
$B^n(x_0,\delta_D(x_0))$ satisfying:  $x_k\in[x_0,x_{k-1})$ and $|x_0-x_k|=\frac{1}{2^{k+2}}\delta_D(x_0).$
Denote $E=\{x_0\}\cup\{x_k\}_{k=1}^{\infty}$. Then $D\setminus E$
is $\psi$-uniform with $\psi$ depending on $\varphi$.
\end{theorem}

\begin{proof}
We note that $E\subseteq\overline{B}^n(x_0, \frac{1}{8}\delta_D(x_0))$. Let $x,y\in D\setminus E$
be arbitrary. We subdivide the proof into several cases.

{\em Case I:} $x,y\in B^n(x_0,\frac{1}{2}\delta_D(x_0))\setminus E.$

By Theorem \ref{uniform1} we know that $B^n(x_0,\frac{1}{2}\delta_D(x_0))\setminus E$ is $c$-uniform
with some constant $c$. Then we can join $x,y$ by a uniform arc $\gamma$ in $B^n(x_0,\frac{1}{2}\delta_D(x_0))\setminus E$,
hence $\gamma$ is uniform in $D\setminus E$ also. Lemma \ref{lem4} shows that $D\setminus E$
is $\varphi_2$-uniform with $\varphi_1(t)=c\log(1+t)$.

{\em Case II:} $x,y\in D\setminus B^n(x_0,\frac{1}{4}\delta_D(x_0)) .$

Since $D$ is $\varphi$-uniform, using Lemma~\ref{Lem1} we get
\begin{eqnarray*}k_{D\setminus E}(x,y)&\!\! \le \!\!& a\Big(\frac{1}{4}\Big)\,k_D(x,y)\\
&\!\! \le \!\!& a\Big(\frac{1}{4}\Big)\,\varphi\Big(\frac{|x-y|}{\min\{\delta_D(x), \delta_D(y\}}\Big)\\
&\!\! \le \!\!& a\Big(\frac{1}{4}\Big)
\varphi\Big(\frac{|x-y|}{\min\{\delta_{D\setminus E}(x), \delta_{D\setminus E}(y)\}}\Big).
\end{eqnarray*}
This gives that
$$k_{D\setminus E}(x,y)\leq \varphi_2\Big(\frac{|x-y|}{\min\{\delta_{D\setminus E}(x), \delta_{D\setminus E}(y)\}}\Big)
$$
with $\varphi_2(t)=a(\frac{1}{4})\varphi(t).$

{\em Case III:} $x\in B^n(x_0,\frac{1}{4}\delta_D(x_0))\setminus E,$ $y\in D\setminus B^n(x_0,\frac{1}{2}\delta_D(x_0)) .$

Let $w\in S^{n-1}(x_0, \frac{1}{2}\delta_D(x_0))$. Then we have $$\delta_{D\setminus E}(w)\geq
\frac{1}{4}\delta_D(x_0)\geq \delta_{D\setminus E}(x),$$
and
$$\max\{|x-w|,|y-w|\}\leq 5|x-y|.$$
Hence
\begin{eqnarray*}k_{D\setminus E}(x,y)&\!\! \le \!\!&k_{D\setminus E}(x,w)+k_{D\setminus E}(w,y)\\
&\!\! \le \!\!&\varphi_1\Big(\frac{|x-w|}{\min\{\delta_{D\setminus E}(x),
\delta_{D\setminus E}(w)\}}\Big)+\varphi_2\Big(\frac{|y-w|}{\min\{\delta_{D\setminus E}(y),
\delta_{D\setminus E}(w)\}}\Big)\\
&\!\! \le \!\!& 2\max\{ \varphi_1\Big(\frac{5|x-y|}{\min\{\delta_{D\setminus E}(x), \delta_{D\setminus E}(y)\}}\Big),
\varphi_2\Big(\frac{5|x-y|}{\min\{\delta_{D\setminus E}(x), \delta_{D\setminus E}(y)\}}\Big)\}\\
&\leq& \varphi_3\Big(\frac{|x-y|}{\min\{\delta_{D\setminus E}(x), \delta_{D\setminus E}(y)\}}\Big),
\end{eqnarray*}
where $\varphi_3(t)=2\max\{\varphi_1(5t),\varphi_2(5t)\}.$
Hence we complete the proof with $\psi=\varphi_3.$
\end{proof}

Although the following is a consequence of \cite[Theorem~5.4]{Va88}, it follows directly
from Theorems \ref{uniform1} and \ref{psiunif2}.
\begin{corollary}\label{cor12}Suppose that $D\subset \Rn$ is a $c$-uniform domain and $x_0\in D$.
Then $D\setminus E$ is $c_1$-uniform with $c_1$ depending on $c$, where $E$ is defined as in Theorem~\ref{psiunif2}.
\end{corollary}

\begin{theorem}\label{paiunif3}
Let $\{x_i\}_{i=1}^{\infty}$ be a sequence of points in $B^n(x_0,r)$ satisfying
$x_i\in[x_0,x_{i-1})$ and $|x_0-x_i|=\frac{1}{2^{i+2}}r.$
Assume that $B_i$, $i=1,2,\ldots$, are disjoint balls with centers $x_i$ and radii $r_i$. For $c\in(0,1)$,
let $E_i\subset B^n(x_i,c r_i)$ be the closed sets whose complements with respect to $\Rn$ are $\varphi$-uniform
and satisfy $x_i\in E_i$. Denote $F=\cup E_i\cup\{x_0\}$.
Then $B^n(x_0,r)\setminus F$ is $\psi$-uniform with $\psi$
depending on $\varphi$.
\end{theorem}

\begin{proof} Without loss of generality, we may assume that $x_0=0, r=1$ and $c=\frac{1}{4}$.

Let $G=B^n\setminus F$, and $x,y\in G$ be arbitrary. We prove the theorem by considering three cases.

{\em Case A:}  $x,y\in G\setminus \overline{B}^n(\frac{3}{16})$.

By the choice of $F$, we have $F\subset B^n(\frac{3}{16})$. Then $G\setminus \overline{B}^n(\frac{3}{16})$
is a $c$-uniform domain with some constant $c$. Hence we can join $x$ and $y$ by an arc $\gamma$
in $G\setminus \overline{B}^n(\frac{3}{16})$ such that $\gamma$ is a uniform arc in $G$.
By Lemma \ref{lem4}, we see that for every $x,y\in G\setminus \overline{B}^n(\frac{3}{16})$,
$$k_G(x,y)\leq \varphi_1\Big(\frac{|x-y|}{\min\{\delta_G(x), \delta_G(y)\}}\Big)
$$
with $\varphi_1(t)=c\log(1+t).$

{\em Case B:}  $x,y\in B^n(\frac{1}{4})\setminus F$.

If $|x-y|\leq \frac{1}{2}\min\{\delta_G(x), \delta_G(y)\},$ then $$k_G(x,y)\leq \int_{[x,y]}
\frac{|dw|}{\delta_G(w)}\leq \frac{2|x-y|}{\min\{\delta_G(x), \delta_G(y)\}}.$$
If $|x-y|\geq \frac{1}{2}\min\{\delta_G(x), \delta_G(y)\},$ then we assume that $|x|\leq |y|$,
and we divide the discussion of the proof into three subcases.

{\em Case B1:}  $x,y\notin \cup^{\infty}_{i=1} B^n(x_i, \frac{1}{2}r_i).$

In this case, there exist some non-negative integers $s$ and $t$ with $s\geq t$ such that
$x\in B^n(\frac{3}{2^{s+2}})\setminus B^n(\frac{3}{2^{s+3}})$ and $y\in B^n(\frac{3}{2^{t+2}})
\setminus B^n(\frac{3}{2^{t+3}}).$
If $|x-y|\leq \frac{3}{2^{t+8}}$, then we have $s=t$ or $s=t+1.$ By corollary \ref{coro43}
we get $G_1=[B^n(\frac{3}{2^{t+1}})\setminus \overline{B}^n(\frac{3}{2^{t+5}})]\setminus F$ is $\varphi_0$-uniform,
where $\varphi_0$ depends on $\varphi$.
Hence
$$k_G(x,y)\leq k_{G_1}(x,y)\leq \varphi_0\Big(\frac{|x-y|}{\min\{\delta_{G_1}(x),
\delta_{G_1}(y)\}}\Big)\leq \varphi_0\Big(\frac{|x-y|}{\min\{\delta_{G}(x), \delta_{G}(y)\}}\Big),
$$
the last equality holds because $\delta_{G_1}(w)\geq \delta_G(w)$
for every $w\in B^n(\frac{3}{2^{t+2}})\setminus B^n(\frac{3}{2^{t+4}}).$

In the following, we consider the case $|x-y|\geq \frac{3}{2^{t+8}}.$ Let $T_x$, $T_y$
be $2$-dimensional subspaces determined by $x$ and $[0,x_1]$, $y$ and $[0,x_1]$, respectively.
Let $l$ denote the line determined by $0$ and $x_1$, then $0$ divides $l$ into two rays:
$l_1$ and $l_2$ with $x_1\in l_2$. Denote the points of intersection of $l_1$ with
$T_x\cap S^{n-1}(|x|)$ and with $T_y\cap S^{n-1}(|y|)$ by $p_x$ and $p_y$, respectively.
Then $x$ and $p_x$ determine a shorter arc (or semicircle) in circle $T_x\cap S^n(|x|)$
which is denoted by $\alpha$, similarly, $y$ and $p_y$ determine a shorter arc (or semicircle)
in circle $T_y\cap S^{n-1}(|y|)$ denoted by $\beta$. Let $\gamma=\alpha\cup[p_x,p_y]\cup\beta$
(see Figure \ref{fig2}),  and let $m$, $n$ be positive integers  such that
$\delta_G(x)=\delta_{\mathbb{R}^n\setminus E_m}(x)$, $\delta_G(y)=\delta_{\mathbb{R}^n\setminus E_n}(y)$
and denote $G_2=\mathbb{R}^n\setminus (E_m \cup E_n)$.

\begin{figure}
\centering
\includegraphics{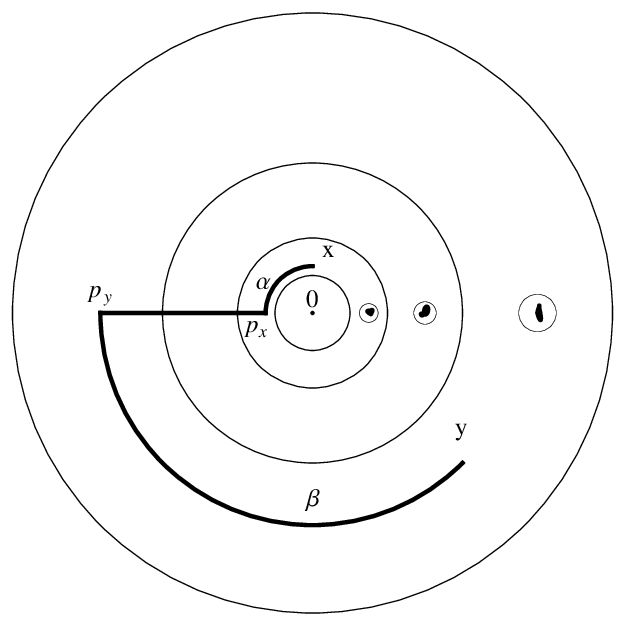}
\caption{Picture for \textit{Case B1}: The radii of the circles are $3/2^{s + 3}$, $3/2^{s + 2}$,
$3/2^{t + 3}$ and $3/2^{t + 2}$ respectively, and the small balls contained in the rings are
balls centered at $x_i$ and with radii $r_i/4$. We note that the removed sets $E_i' s$ are contained in such balls.
\label{fig2}}
\end{figure}

We now prove
\be\label{claim1}\delta_G(w)\geq \frac{1}{2}\min\{\delta_{G_2}(x), \delta_{G_2}(y)\}\quad \mbox{for every $w\in\gamma$}.
\ee
Let $p$ be a positive integer such that $\delta_G(w)=\delta_{\mathbb{R}^n\setminus E_p}(w)$.
Then for all $w\in \alpha$  we have
$$\delta_G(w)\geq\frac{1}{2}|w-x_k|\geq\frac{1}{2}|x_k-x|\geq \frac{1}{2}\delta_{G}(x)=\frac{1}{2}\delta_{G_2}(x).
$$
Similarly, for all $w\in \beta$, $\delta_G(w)\geq \frac{1}{2}\delta_{G_2}(y)$ holds.

If $w\in[p_x,p_y]$, then we get $\delta_{G}(w)\geq \delta_{G}(p_x)\geq\frac{1}{2}\delta_{G_2}(x).$
The proof of (\ref{claim1}) follows.

By (\ref{claim1}) and Corollary~\ref{coro43}, we get
\begin{eqnarray*}k_G(x,y)&\!\! \le \!\!& \int_{\gamma}\frac{|dw|}{\delta_G(w)}
\le 2 \int_{\gamma}\frac{|dw|}{\min\{\delta_{G_2}(x), \delta_{G_2}(y)\}}\\
&\!\! \le \!\!&2^6(2\pi+1)\frac{|x-y|}{\min\{\delta_{G_2}(x), \delta_{G_2}(y)\}}
\le 2^6(2\pi+1)\Big(e^{k_{G_2}(x,y)}-1\Big)\\&\!\! \le \!\!&2^6(2\pi+1)\Big(e^{H_1}-1\Big)
= 2^6(2\pi+1)\Big(e^{H_2}-1\Big),
\end{eqnarray*}
with $H_1={\varphi_0\Big(\frac{|x-y|}{\min\{\delta_{G_2}(x), \delta_{G_2}(y)\}}\Big)}$
and $H_2={\varphi_0\Big(\frac{|x-y|}{\min\{\delta_{G}(x), \delta_{G}(y)\}}\Big)},$  which shows
that the theorem in this subcase holds with $\psi(t)=2^6(2\pi+1)\Big(e^{\varphi_0(t)}-1\Big)=:\vartheta(t).$

{\em Case B2:} There exists some positive integer $p$ such that $x,y\in B^n(x_p, \frac{5}{8}r_p)\setminus E_p.$

Let $G_3=B^n(x_p, r_p)\setminus E_p.$ Then by Theorem \ref{phiunif-E},  we know that
$G_3$ is $\varphi_0$-uniform. Hence
$$k_G(x,y)\leq k_{G_3}(x,y)\leq \varphi_0\Big(\frac{|x-y|}{\min\{\delta_{G_3}(x)
\wedge \delta_{G_3}(y)\}}\Big)
\leq \varphi_0\Big(\frac{5|x-y|}{3 \min\{\delta_G(x), \delta_{G}(y)\}}\Big).
$$

{\em Case B3:} There exists some positive integer $p$ such that $x\in B^n(x_p, \frac{1}{2}r_p)
\setminus E_p$ and \linebreak \hspace*{2.5cm} $y\in [B^n\setminus B^n(x_p,\frac{5}{8}x_p)]\setminus F$.

Choose $w\in S^{n-1}(x_p, \frac{1}{2}r_p)$ such that $d_G(w)\geq d_G(x)$. Then
$$|x-w|\leq r_p\leq 8|x-y|$$ and $$|y-w|\leq |x-w|+|x-y|\leq 9|x-y|.
$$
Hence, Case $B1$ and Case $B2$ together yield
\begin{eqnarray*}k_G(x,y)&\!\! \le \!\!& k_G(x,w)+k_G(w,y)\\&\!\!
\le \!\!&\varphi_0\Big(\frac{|x-w|}{\min\{\delta_G(x), \delta_G(w)\}}\Big)
+\vartheta(\frac{|w-y|}{\min\{\delta_G(y), \delta_G(w)\}}\Big) \\
&\!\! \le \!\! & \varphi_0\Big(\frac{8|x-y|}{\min\{\delta_G(x), \delta_G(y)\}}\Big)
+\vartheta(\frac{9|x-y|}{\min\{\delta_G(y), \delta_G(x)\}}\Big)\\
&\!\! \le \!\!&2\max\Big\{\varphi_0\Big(\frac{8|x-y|}{\min\{\delta_G(x), \delta_G(y)\}}\Big),
\vartheta\Big(\frac{9|x-y|}{\min\{\delta_G(y), \delta_G(x)\}}\Big)\Big\},
\end{eqnarray*}
which shows that in this subcase the theorem holds with
$$\psi(t)=2\max\{\varphi_0(8t),\vartheta(9t)\}=:\varphi_3(t)\,.
$$

{\em Case C:}  $x\in B^n\setminus B^n(\frac{1}{4})$, $y\in B^n(\frac{3}{16})\setminus F$.

Choose $w\in S^{n-1}(\frac{1}{4})$ such that $d_G(w)\geq d_G(y).$ Then $$\max\{|x-w|,|y-w|\}\leq 9|x-y|,$$ which shows
\begin{eqnarray*}k_G(x,y)&\!\! \le \!\!& k_G(x,w)+k_G(w,y)\\&\!\! \le \!\!&
\varphi_3\Big(\frac{|x-w|}{\min\{\delta_G(x), \delta_G(w)\}}\Big)+\varphi_2\Big(\frac{|w-y|}{\min\{\delta_G(y),
\delta_G(w)\}}\Big) \\&\!\!
\le \!\!& \varphi_3\Big(\frac{9|x-y|}{\min\{\delta_G(x), \delta_G(y)\}}\Big)
+\varphi_2\Big(\frac{9|x-y|}{\min\{\delta_G(y), \delta_G(x)\}}\Big)\\&\!\!
\le \!\!&2\max\Big\{\varphi_3\Big(\frac{9|x-y|}{\min\{\delta_G(x), \delta_G(y)\}}\Big),
\varphi_2\Big(\frac{9|x-y|}{\min\{\delta_G(y), \delta_G(x)\}}\Big)\Big\}\\&\!\!
= \!\!&\varphi_4\Big(\frac{|x-y|}{\min\{\delta_G(y), \delta_G(x)\}}\Big),
\end{eqnarray*}
where $\varphi_4(t)=2\max\{\varphi_3(9t),\varphi_2(9t)\}$.

We verified all the cases and our conclusion holds with $\psi=\varphi_4$.
\end{proof}

We now extend Theorem~\ref{paiunif3} to arbitrary domains.

\begin{theorem}\label{psiunif4} Suppose that $D\subsetneq \mathbb{R}^n$ is a $\varphi$-uniform domain and
$x_0\in D$.
Let $\{x_i\}_{i=1}^{\infty}$ be a sequence of points in $ B^n(x_0,\delta_D(x_0))$ satisfying
$x_i\in[x_0,x_{i-1})$ and $|x_0-x_i|=\frac{1}{2^{i+2}}\delta_D(x_0).$
Let $B_i$'s be  disjoint balls with centers $x_i$ and radii $r_i$ and for $c\in(0,1)$,
let $E_i\subset B^n(x_i,c r_i)$ be the closed sets whose complements with respect to $\Rn$
are $\psi$-uniform and satisfy $x_i\in E_i$.
Denote
$F=\cup E_i\cup\{x_0\}.$
Then $D\setminus F$ is $\varphi_3$-uniform with $\varphi_3$
depending on $\varphi$ and $\psi$.
\end{theorem}
\begin{proof} We note that $F\subseteq B^n(x_0, \frac{3}{16}\delta_D(x_0))$. Let $x,y\in D\setminus F$ be arbitrary.
We prove the theorem by considering three cases.

{\em Case I:} $x,y\in B^n(x_0,\frac{1}{2}\delta_D(x_0))\setminus F.$

By Theorem \ref{paiunif3} we know that $D_1=B^n(x_0,\frac{1}{2}\delta_D(x_0))\setminus F$
is $\varphi_1$-uniform with $\varphi_1$ depending only on $\varphi$ and $\psi$. Then
\begin{eqnarray*}k_{D\setminus F}(x,y)&\!\! \le \!\!&  k_{D_1}(x,y)\\&\!\! \le \!\!&
\varphi_1\Big(\frac{|x-y|}{\min\{\delta_{D_1}(x), \delta_{D_1}(y)\}}\Big)\\&\!\! = \!\!&
\varphi_1\Big(\frac{|x-y|}{\min\{\delta_{D\setminus F}(x), \delta_{D\setminus F}(y)\}}\Big).
\end{eqnarray*}

{\em Case II:} $x,y\in D_2=D\setminus B^n(x_0,\frac{1}{4}\delta_D(x_0)) .$

Since $D$ is $\varphi$-uniform, using Lemma~\ref{genVu2-lem} with $\alpha=\frac{1}{4}$ and $\theta=\frac{3}{4}$ we get
\begin{eqnarray*}k_{D\setminus F}(x,y)&\!\! \le \!\!&  k_{D_2}(x,y)\\&\!\! \le \!\!& a\Big(\frac{1}{4},
\frac{3}{4}\Big)k_D(x,y)\\&\!\! \le \!\!&  a\Big(\frac{1}{4},\frac{3}{4}\Big)\varphi\Big(\frac{|x-y|}{\min\{\delta_D(x),
\delta_D(y)\}}\Big)\\
&\!\! \le \!\!&  a\Big(\frac{1}{4},\frac{3}{4}\Big)\varphi\Big(\frac{|x-y|}{\min\{\delta_{D\setminus F}(x),
\delta_{D\setminus F}(y)\}}\Big).
\end{eqnarray*}
This gives that
$$k_{D\setminus F}(x,y)\leq \varphi_2\Big(\frac{|x-y|}{\min\{\delta_{D\setminus F}(x), \delta_{D\setminus F}(y)\}}\Big)
$$
with $\varphi_2(t)=a(\frac{1}{4},\frac{3}{4})\varphi(t).$

{\em Case III:} $x\in B^n(x_0,\frac{1}{4}\delta_D(x_0))\setminus F,$ $y\in D\setminus B^n(x_0,\frac{1}{2}\delta_D(x_0)) .$

Let $w\in S^{n-1}(x_0, \frac{1}{2}\delta_D(x_0))$. Then we have $$\delta_{D\setminus F}(w)
\geq \frac{1}{4}\delta_D(x_0)\geq \delta_{D\setminus F}(x),$$
and
$$\max\{|x-w|,|y-w|\}\leq 5|x-y|.$$
Hence
\begin{eqnarray*}k_{D\setminus F}(x,y)&\!\! \le \!\!& k_{D\setminus F}(x,w)+k_{D\setminus F}(w,y)\\&\!\!
\le \!\!&  \varphi_1\Big(\frac{|x-w|}{\min\{\delta_{D\setminus F}(x), \delta_{D\setminus F}(w)\}}\Big)\\&\!\!
\le \!\!&  2\max\Big\{ \varphi_1\Big(\frac{5|x-y|}{\min\{\delta_{D\setminus F}(x), \delta_{D\setminus F}(y)\}}\Big),
\varphi_2\Big(\frac{5|x-y|}{\min\{\delta_{D\setminus F}(x), \delta_{D\setminus E}(y)\}}\Big)\Big\}\\&\!\! \le \!\!&
\varphi_3\Big(\frac{|x-y|}{\min\{\delta_{D\setminus E}(x), \delta_{D\setminus E}(y)\}}\Big),
\end{eqnarray*}
where $\varphi_3(t)=2\max\{\varphi_1(5t),\varphi_2(5t)\}.$
Hence we complete the proof of the theorem.
\end{proof}

\bigskip
{\sc Acknowledgements.} This research was started in the fall of
2008 when the third author was visiting the University of Turku,
Finland, supported by CIMO, grant number TM-08-5606.
The third author also acknowledges the support of the National Board
for Higher Mathematics, DAE, India, during his post-doctoral study at IIT Madras. The work of the first author
was supported by the Graduate School of Analysis and its
Applications, Finland. The work of the second author was supported by the Academy of Finland grant of Matti Vuorinen
Project number 2600066611. and by  Hunan Provincial Innovation Foundation For Postgraduate, China.

The authors thank the referees who have made valuable comments on various versions of this manuscripts.
\bigskip

\end{document}